\newtheorem{theorem}{Theorem}[section]
\newtheorem{lemma}[theorem]{Lemma}
\newtheorem{conjecture}[theorem]{Conjecture}
\theoremstyle{remark}
\newtheorem{remark}[theorem]{Remark}
\numberwithin{equation}{section}
\begin{document}

\title[A characterization theorem for orthogonal polynomials]
{A characterization of continuous $q$-Jacobi, Chebyshev of the first kind and Al-Salam Chihara polynomials}

\author{K. Castillo, D. Mbouna, and J. Petronilho}
\address{CMUC, Department of Mathematics, University of Coimbra, 3001-501 Coimbra, Portugal}
\email[K. Castillo]{kenier@mat.uc.pt}
\email[D. Mbouna]{dmbouna@mat.uc.pt}

\date{\today}

\thanks{Our beloved friend and mentor J. Petronilho passed away unexpectedly on August 27, 2021 while we were preparing the final version of this manuscript. To him, our eternal gratitude.}

\subjclass[2000]{Primary 42C05; Secondary 33C45}

\date{\today}

\keywords{Askey-Wilson operator, Al-Salam-Chihara polynomials,  Chebyshev polynomials of the first kind, continuous $q$-Jacobi polynomials}

\maketitle


\begin{abstract}
The purpose of this note is to characterize those orthogonal polynomials sequences $(P_n)_{n\geq0}$ for which 
$$
\pi(x)\mathcal{D}_q P_n(x)=(a_n x+b_n)P_n(x)+c_n P_{n-1}(x),\quad n=0,1,2,\dots,
$$
where $\mathcal{D}_q$ is the Askey-Wilson operator, $\pi$ is a polynomial of degree at most 2,
and $(a_n)_{n\geq0}$, $(b_n)_{n\geq0}$ and $(c_n)_{n\geq0}$ are sequences of
complex numbers such that $c_n\neq0$ for $n=1,2,\dots$.
\end{abstract}

\section{Introduction and main results}
Let $\pi$ be a nonzero polynomial of degree at most 2 and
consider three sequences of complex numbers $(a_n)_{n\geq0}$, $(b_n)_{n\geq0}$ and $(c_n)_{n\geq0}$.
Al-Salam and Chihara \cite{Al-SalamChihara1972} proved that the only
orthogonal polynomial sequences (OPS), say $(P_n)_{n\geq0}$,
that satisfy 
\begin{align}\label{Al-Salam-Chihara}
\pi (x)\,DP_n (x)= (a_n x+b_n)P_n (x) + c_n P_{n-1} (x) , 
\end{align}
are those of Hermite, Laguerre, Jacobi, and Bessel.
Here $D$ denotes the standard derivative with respect to $x$.
Consider now (\ref{Al-Salam-Chihara}) with $D$ replaced by the Askey-Wilson operator,
\begin{align}
(\mathcal{D}_q  f)(x)=\frac{\breve{f}\big(q^{1/2} z\big)
-\breve{f}\big(q^{-1/2} z\big)}{\breve{e}\big(q^{1/2}z\big)-\breve{e}\big(q^{-1/2} z\big)},\quad
z=e^{i\theta}, \label{0.3}
\end{align}
where $\breve{f}(z)=f\big((z+1/z)/2\big)=f(\cos \theta)$ for each polynomial $f$ and $e(x)=x$.
Here $0<q<1$ and $\theta$ is not necessarily a real number (see \cite[p.\,300]{IsmailBook2005}).
The following conjecture is the first part of \cite[Conjecture 24.7.8]{IsmailBook2005}, rewritten using the \eqref{TTRR} below.

\begin{conjecture}\label{IsmailConj2478}
Let $(P_n)_{n\geq0}$ be a monic OPS and $\pi$ be a polynomial of degree at most $2$
which does not depend on $n$. If $(P_n)_{n\geq0}$ satisfies
\begin{align}
\pi(x)\mathcal{D}_q P_n (x)= (a_n x+b_n)P_n (x) + c_n P_{n-1} (x), \label{0.2Dq}
\end{align}
then $(P_n)_{n\geq0}$ are continuous $q$-Jacobi polynomials, Al-Salam-Chihara polynomials,
or special or limiting cases of them.
\end{conjecture}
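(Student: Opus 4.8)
The plan is to translate the relation \eqref{0.2Dq} into a distributional equation for the orthogonality functional of $(P_n)_{n\geq0}$, to deduce from it that $(P_n)_{n\geq0}$ lies in the Askey--Wilson class, and finally to exploit the \emph{particular} shape of the right-hand side of \eqref{0.2Dq}, together with the requirement that $\pi$ be independent of $n$, to force the four Askey--Wilson parameters to specialize exactly to the continuous $q$-Jacobi, Al-Salam-Chihara, or Chebyshev-of-the-first-kind cases.

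\textbf{Step 1 (reading off the data).} First I would write the three-term recurrence relation of the monic OPS in the form $xP_n(x)=P_{n+1}(x)+\beta_nP_n(x)+\gamma_nP_{n-1}(x)$ with $\gamma_n\neq0$, and substitute it into the right-hand side of \eqref{0.2Dq}. This gives
\[
\pi(x)\,\mathcal{D}_qP_n(x)=a_nP_{n+1}(x)+(a_n\beta_n+b_n)P_n(x)+(a_n\gamma_n+c_n)P_{n-1}(x),
\]
so that, in the basis $(P_k)_{k\geq0}$, $\pi\,\mathcal{D}_qP_n$ is supported on $\{P_{n-1},P_n,P_{n+1}\}$ alone. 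Since $\mathcal{D}_q$ sends a monic polynomial of degree $n$ to $[n]_q\,x^{n-1}+\cdots$, with $[n]_q:=(q^{-n/2}-q^{n/2})/(q^{-1/2}-q^{1/2})$, comparison of the coefficients of $x^{n+1}$ forces $a_n=\pi_2\,[n]_q$ when $\deg\pi=2$ ($\pi_2$ the leading coefficient of $\pi$) and $a_n=0$ when $\deg\pi\leq1$; in the latter case $\pi\,\mathcal{D}_qP_n\in\operatorname{span}\{P_{n-1},P_n\}$, an even stronger constraint, which I would dispose of separately (it leaves only continuous $q$-Hermite/Al-Salam-Chihara-type solutions and the trivial situation $\pi\equiv\mathrm{const}$).

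\textbf{Step 2 (the $q$-Pearson equation).} Next I would recall the Leibniz rules for $\mathcal{D}_q$ and its companion averaging operator $\mathcal{S}_q$ (with $\breve{(\mathcal{S}_qf)}(z)=\tfrac12[\breve{f}(q^{1/2}z)+\breve{f}(q^{-1/2}z)]$), namely $\mathcal{D}_q(fg)=\mathcal{S}_qf\,\mathcal{D}_qg+\mathcal{D}_qf\,\mathcal{S}_qg$ and $\mathcal{S}_q(fg)=\mathcal{S}_qf\,\mathcal{S}_qg+\mathsf{U}\,\mathcal{D}_qf\,\mathcal{D}_qg$ with $\mathsf{U}(x)=\tfrac14(q^{1/2}-q^{-1/2})^2(x^2-1)$, together with the usual action of these operators on functionals, $\langle\mathcal{D}_q\mathbf{u},f\rangle=-\langle\mathbf{u},\mathcal{D}_qf\rangle$ and $\langle\mathcal{S}_q\mathbf{u},f\rangle=\langle\mathbf{u},\mathcal{S}_qf\rangle$. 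Let $\mathbf{u}$ be the (quasi-definite) functional orthogonalizing $(P_n)_{n\geq0}$. The support condition of Step 1 reads $\langle\mathbf{u},(\pi\,\mathcal{D}_qP_n)\,P_m\rangle=0$ for all $m\leq n-2$ and all $n$; rewriting this inner product via the Leibniz rules and using the standard minimality argument from the theory of semiclassical functionals on $q$-quadratic lattices, it is equivalent to a single distributional (Pearson-type) equation
\[
\mathcal{D}_q(\phi\,\mathbf{u})=\mathcal{S}_q(\psi\,\mathbf{u}),\qquad \deg\phi\leq2,\quad \deg\psi=1,
\]
with $\phi,\psi$ independent of $n$ and $\phi$ admissible (so that $\mathbf{u}$ stays quasi-definite); equivalently, $(P_n)_{n\geq0}$ is $\mathcal{D}_q$-classical, hence, up to an affine change of variable, an Askey--Wilson OPS $P_n(\cdot\,;a,b,c,d\,|\,q)$ or one of its confluent/limiting cases. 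Crucially, demanding in addition that the right-hand side of \eqref{0.2Dq} carry \emph{only} $P_n$ and $P_{n-1}$, with the $P_n$-coefficient affine in $x$ and with the \emph{same} $\pi$ for every $n$, is strictly stronger than mere $\mathcal{D}_q$-classicality: expanding the averaged polynomials $\mathcal{S}_qP_k$ that occur in the canonical Askey--Wilson structure relation back into the basis $(P_k)_{k\geq0}$, this forces a whole family of connection coefficients to vanish, equivalently an explicit algebraic relation between $\phi$ and $\psi$ (beyond what $\mathcal{D}_q$-classicality alone imposes).

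\textbf{Step 3 (solving for the parameters).} Finally I would enumerate the solutions of the $q$-Pearson equation with $\deg\phi\leq2$ that also satisfy the extra relation of Step 2. The solutions of the plain $q$-Pearson equation are, up to affine equivalence, the Askey--Wilson functionals and their degenerations; carrying out the analysis of the extra relation in the parameters $(a,b,c,d)$ — and separately treating the degenerate cases $\deg\phi<2$, $\phi$ with a double zero, a parameter equal to $0$, and the regimes in which the $q$-integers $[n]_q$ or the recurrence coefficients degenerate — should show that the admissible parameter sets are precisely those defining the continuous $q$-Jacobi polynomials, the Al-Salam-Chihara polynomials (two of the four parameters vanishing), and the Chebyshev polynomials of the first kind (for which $\phi(x)\propto x^2-1$, $a_n\propto[n]_q$, $b_n=0$, $c_n\propto-[n]_q$, and which reduce to neither of the previous two families when $q\neq1$), together with their special and limiting cases. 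This is the assertion of the conjecture. As a consistency check, letting $q\to1^{-}$ turns \eqref{0.2Dq} into \eqref{Al-Salam-Chihara} and the list above collapses to the Hermite, Laguerre, Jacobi and Bessel polynomials of the Al-Salam-Chihara theorem.

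\textbf{Expected main obstacle.} The hard part will be Step 3 together with the bookkeeping underlying Step 2: since $\mathcal{D}_q$ and $\mathcal{S}_q$ satisfy no convenient polynomial identities, the passage between the basis $(P_k)_{k\geq0}$ and the averaged Askey--Wilson structure relation is cleanest in the Askey--Wilson monomial basis $\big(\prod_{j=0}^{n-1}(1-2sxq^{j}+s^{2}q^{2j})\big)_{n\geq0}$ for a suitable base point $s$, and one must organize the resulting large system of algebraic constraints carefully and be sure not to drop any degenerate or limiting family — in particular the Chebyshev-of-the-first-kind case, which sits somewhat apart from the other two branches. An alternative, more self-contained route would bypass the Askey--Wilson classification altogether: apply $\mathcal{D}_q$ and $\mathcal{S}_q$ to the three-term recurrence relation, substitute \eqref{0.2Dq}, and deduce a closed nonlinear recursion for $(\beta_n,\gamma_n)$; solving it directly (the dominant balances in $q^{\pm n}$ selecting the three families) reaches the same conclusion, at the price of heavier computation.
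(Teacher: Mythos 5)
Your overall architecture is the right one, and in fact it shadows the paper quite closely: the distributional step of your Step~2 is exactly the paper's first lemma (the functional $\mathbf{u}$ is shown to satisfy $\mathcal{D}_q(\phi\mathbf{u})=\mathcal{S}_q(\psi\mathbf{u})$ with explicit $\phi,\psi$ built from $\pi$, the $a_n,b_n,c_n$ and the first recurrence coefficients), and the ``alternative, more self-contained route'' you mention at the very end --- apply $\mathcal{D}_q$ and $\mathcal{S}_q$ to the three-term recurrence, substitute \eqref{0.2Dq}, and extract a closed nonlinear system for the recurrence coefficients --- is not an alternative at all: it is precisely what the paper does (its second lemma), and it is then combined with the explicit formulas for $B_n$ and $C_n$ coming from the $x$-classical (Pearson) data to pin down the families.

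The genuine gap is that the decisive part of the argument is asserted rather than performed. In Step~2 you invoke a classification ``$\mathcal{D}_q$-classical $\Rightarrow$ Askey--Wilson or a confluent/limiting case'' as known and then, in Step~3, you say that imposing the extra constraints on the four parameters ``should show'' that only continuous $q$-Jacobi, Al-Salam-Chihara and Chebyshev of the first kind survive. That enumeration is the theorem: all of the difficulty lies in solving the nonlinear system for $(B_n,C_n)$ (or equivalently the parameter constraints), and above all in eliminating the spurious branches. In the paper this consumes most of the work: for $\deg\pi=1$ one must show $k_1k_2=0$ for the two geometric components of $c_n/C_n$ (a delicate limit argument using the explicit $B_n$), and for $\deg\pi=2$ one must prove $\widehat{a}\,\widehat{b}\,(1\pm2\mathfrak{a}u)\neq0$, split into the cases $\phi'(0)=0$ (which, after a consistency check at $n=3$, yields exactly Chebyshev of the first kind and kills a competing candidate with $C_1=1/4$) and $\phi'(0)\neq0$ (which, after solving a quadratic to introduce the parameters $a,b$ and verifying the remaining equations of the system, yields continuous $q$-Jacobi with $q$ or $q^{-1}$). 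None of this degenerate-branch analysis is present in your proposal, and your Step~1 remark that $\deg\pi\leq1$ ``leaves only continuous $q$-Hermite/Al-Salam-Chihara-type solutions'' is likewise an assertion of the conclusion, not an argument (the correct outcome for $\deg\pi=1$ is the specific subfamily $c/d=q^{\pm1/2}$, and for $\deg\pi=0$ one needs Al-Salam's theorem). So as written the proposal is a sound plan that reduces the conjecture to exactly the computations and case analysis that constitute the proof, but it does not yet prove it.
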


Ismail himself proved that the continuous $q$-Jacobi polynomials 
indeed satisfy (\ref{0.2Dq}) for suitable polynomial $\pi$ and parameters $a_n$, $b_n$, and $c_n$
(cf. \cite[Theorem 15.5.2]{IsmailBook2005}). Al-Salam \cite{Al-Salam1995} proved Conjecture \ref{IsmailConj2478} for $\pi(x)=1$ (see \cite[Proposition 5.3.1]{DM-Thesis} for an alternative proof of this result based on the approach developed in this note),
by characterizing the Rogers $q$-Hermite polynomials, $P_n(x)=H_n(x|q)$, as the only OPS that fulfill $\mathcal{D}_q P_n=c_nP_{n-1}$ for $n=1,2,\ldots$. Recall that the monic continuous $q$-Jacobi polynomials, $\widehat{P}_n^{(a,b)}(x|q)$,
depend on two real parameters $a$ and $b$, and they
are characterized by the three-term recurrence relation 
$$
\begin{array}{rcl}
x\widehat{P}_n^{(a,b)}(x|q)&=&\widehat{P}_{n+1}^{(a,b)}(x|q)
+\mbox{$\frac12$}\,\big(q^{(2a+1)/4}+q^{-(2a-1)/4}-y_n-z_n\big)\,\widehat{P}_n^{(a,b)}(x|q) \\ [0.25em]
&&\displaystyle\, +\,\mbox{$\frac14$}y_{n-1}z_n\,\,\widehat{P}_{n-1}^{(a,b)}(x|q),
\end{array}
$$
$n=0,1,\ldots$, being
\begin{align*}
& y_n=\frac{(1-q^{n+a+1})(1-q^{n+a+b+1})(1+q^{n+(a+b+1)/2})(1+q^{n+(a+b+2)/2})}
{q^{(2a+1)/4}(1-q^{2n+a+b+1})(1-q^{2n+a+b+2})} , \\
& z_n=\frac{q^{(2a+1)/4}(1-q^n)(1-q^{n+b})(1+q^{n+(a+b)/2})(1+q^{n+(a+b+1)/2})}
{(1-q^{2n+a+b})(1-q^{2n+a+b+1})},
\end{align*}
while the monic Al-Salam-Chihara polynomials, $Q_n(x;c,d|q)$,
which also depend on two parameters $c$ and $d$, are characterized by
$$
\begin{array}{rcl}
xQ_{n}(x;c,d|q)&=&Q_{n+1}(x;c,d|q)+\,\mbox{$\frac12$}\,(c+d)q^n\,Q_{n}(x;c,d|q) \\ [0.25em]
&&\displaystyle\, +\,\mbox{$\frac14$}\,(1-cd q^{n-1})(1-q^n)\,Q_{n-1}(x;c,d;q),
\end{array}
$$
$n=0,1,\ldots$, provided we define $\widehat{P}_{-1}^{(a,b)}(x|q)=Q_{-1}(x;c,d|q)=0$
(see e.g. \cite{IsmailBook2005}).
Further, up to normalization, the Rogers $q$-Hermite polynomials are the special case $c=d=0$ of the Al-Salam-Chihara polynomials.

The following two theorems summarize the main results of this note and, together with Al-Salam's theorem, give positive answer to Conjecture \ref{IsmailConj2478}. 

\begin{theorem}\label{T1}
The Al-Salam Chihara polynomials with nonzero parameters $c$ and $d$ such that $c/d=q^{\pm 1/2}$ are the only
OPS  satisfying \eqref{0.2Dq} for $\deg \pi=1$. 
\end{theorem}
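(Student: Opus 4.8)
\emph{Plan of proof.} The plan is to reduce \eqref{0.2Dq} to a system of recursions for the recurrence coefficients of $(P_n)_{n\ge0}$, to solve it, and to recognize the outcome as Al-Salam-Chihara. First, write $\pi(x)=\pi_1x+\pi_0$ with $\pi_1\neq0$, let $x_0:=-\pi_0/\pi_1$ be its zero, put $\widehat n:=(q^{n/2}-q^{-n/2})/(q^{1/2}-q^{-1/2})$, and denote by
\[
xP_n(x)=P_{n+1}(x)+\beta_nP_n(x)+\gamma_nP_{n-1}(x),\qquad n\ge0\ \ (P_{-1}=0,\ \gamma_n\neq0\text{ for }n\ge1),
\]
the three-term recurrence relation of the monic OPS $(P_n)_{n\ge0}$. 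Since $\mathcal{D}_q$ lowers the degree by one and sends the leading term $x^n$ of $P_n$ to $\widehat n\,x^{n-1}$, the left-hand side of \eqref{0.2Dq} has degree $n$ with leading coefficient $\pi_1\widehat n$; comparing with the right-hand side forces $a_n=0$ for every $n$ and $b_n=\pi_1\widehat n$ (while $b_0=c_0=0$, as $\mathcal{D}_q$ annihilates constants). Thus \eqref{0.2Dq} reduces to
\begin{equation}\label{eq:plan-reduced}
\pi(x)\,\mathcal{D}_q P_n(x)=\pi_1\widehat n\,P_n(x)+c_nP_{n-1}(x),\qquad n\ge0 .
\end{equation}

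Next, since $\mathcal{D}_q P_n$ is a polynomial, \eqref{eq:plan-reduced} shows that $\pi$ divides $\pi_1\widehat n\,P_n+c_nP_{n-1}$; evaluating at $x_0$ gives $\pi_1\widehat n\,P_n(x_0)+c_nP_{n-1}(x_0)=0$, and because $c_n\neq0$ and $P_0(x_0)=1$ a short induction yields $P_n(x_0)\neq0$ for all $n$. Hence $P_{n-1}(x_0)/P_n(x_0)=-\pi_1\widehat n/c_n$ for $n\ge1$, and evaluating the recurrence relation at $x_0$ produces the convenient relation
\begin{equation}\label{eq:plan-x0}
\beta_n-\frac{c_{n+1}}{\pi_1\widehat{n+1}}-\frac{\pi_1\widehat n}{c_n}\,\gamma_n=x_0\quad(n\ge1),\qquad \beta_0-\frac{c_1}{\pi_1\widehat1}=x_0 .
\end{equation}

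Then I would bring in the averaging operator $\mathcal{S}_q$, $(\mathcal{S}_q f)(x)=\tfrac12\big(\breve f(q^{1/2}z)+\breve f(q^{-1/2}z)\big)$, for which $\mathcal{D}_q e=1$, $\mathcal{S}_q e=\alpha\,e$ with $\alpha:=\tfrac12(q^{1/2}+q^{-1/2})$, together with the product rules $\mathcal{D}_q(fg)=(\mathcal{D}_q f)(\mathcal{S}_q g)+(\mathcal{S}_q f)(\mathcal{D}_q g)$ and $\mathcal{S}_q(fg)=(\mathcal{S}_q f)(\mathcal{S}_q g)+\tfrac14(q^{1/2}-q^{-1/2})^2(x^2-1)(\mathcal{D}_q f)(\mathcal{D}_q g)$. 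Applying $\mathcal{D}_q$ to the recurrence relation and using these rules together with \eqref{eq:plan-reduced}, I get an explicit expansion $\pi\,\mathcal{S}_q P_n=\lambda_{n+1}P_{n+1}+\lambda_nP_n+\lambda_{n-1}P_{n-1}+\lambda_{n-2}P_{n-2}$, with the $\lambda$'s built linearly from the $c_k$ and polynomially from $\widehat k$, $\beta_k$, $\gamma_k$, $\alpha$. Substituting this expansion and its analogues at levels $n\pm1$, along with \eqref{eq:plan-reduced}, into the identity obtained by applying $\mathcal{S}_q$ to the recurrence relation and multiplying through by $\pi$, I reach an equality between two linear combinations of $P_{n+2},\dots,P_{n-3}$; matching coefficients yields, besides \eqref{eq:plan-x0}, further nonlinear recursions tying together $\beta_{n-1},\beta_n,\beta_{n+1}$, $\gamma_{n-1},\gamma_n,\gamma_{n+1}$ and $c_{n-1},c_n,c_{n+1}$.

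Finally, inserting $b_n=\pi_1\widehat n$, solving the parts of the system that are linear in the $c_k$ to express $c_n$ through $\beta_k,\gamma_k,\widehat k$, and feeding this back, I expect the remaining recursions to close on $\beta_n$ and $\gamma_n$ and to force $\beta_n=\tfrac12(c+d)q^n$ and $\gamma_n=\tfrac14(1-cd\,q^{n-1})(1-q^n)$ for suitable constants $c,d$, so that $(P_n)_{n\ge0}$ is a sequence of monic Al-Salam-Chihara polynomials $Q_n(\,\cdot\,;c,d\mid q)$. The one surviving coefficient identity — which encodes precisely that the structure polynomial $\pi$ attached to $Q_n(\,\cdot\,;c,d\mid q)$ has degree $1$ and not $2$ — should then collapse to $cd\neq0$ and $c/d=q^{\pm1/2}$. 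Conversely, for Al-Salam-Chihara polynomials with $c,d\neq0$ and $c/d=q^{\pm1/2}$ one checks directly, e.g.\ from the explicit action of $\mathcal{D}_q$ on these polynomials (cf.\ \cite[Theorem 15.5.2]{IsmailBook2005}), that \eqref{0.2Dq} holds with a polynomial $\pi$ of degree exactly $1$. The hard part will be the third paragraph: producing the two basis expansions of the $\pi\,\mathcal{S}_q P_n$-type quantities, organizing the overdetermined nonlinear system they give, and isolating the identity that carries the constraint ``$\deg\pi=1$'' — for it is exactly there that $c/d=q^{\pm1/2}$ appears and the degenerate case $cd=0$ (which would lower $\deg\pi$ to $0$, the Rogers $q$-Hermite case covered by Al-Salam's theorem) gets excluded.
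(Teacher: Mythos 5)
Your reduction steps are sound and coincide with the paper's starting point: comparing leading coefficients to get $a_n=0$ and $b_n=\pi_1\widehat n$ is exactly the relation \eqref{power-pi1}, and applying $\mathcal{D}_q$ and $\mathcal{S}_q$ to the three-term recurrence and matching coefficients against $P_{n+2},\dots,P_{n-3}$ is precisely how Lemma \ref{system-in-genral-form-} produces the nonlinear system \eqref{eq1S-eq2S}--\eqref{eq5S}. The divisibility/evaluation-at-$x_0$ observation is correct as far as it goes. But the decisive part of your argument is the sentence ``I expect the remaining recursions to close on $\beta_n$ and $\gamma_n$,'' and that expectation is not justified: the coefficient-matching system alone does not close. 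Concretely, the linear difference equation for $B_n$ (the analogue of \eqref{eq3S}) has a two-parameter general solution (cf.\ \eqref{explicit-Bn}), and likewise for $C_n-1/4$; the nonlinear equations do not by themselves eliminate the spurious constants, nor do they by themselves yield the dichotomy $k_1k_2=0$ for $t_n=c_n/C_n=k_1q^{n/2}+k_2q^{-n/2}$, which is exactly the statement (Lemma \ref{uniqueness-case-1}) from which $c/d=q^{\pm1/2}$ ultimately emerges.

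What the paper uses to close the system — and what is missing from your plan — is a structural input beyond the recursions: Lemma \ref{conject-is-dx-classical} shows that the orthogonality functional $\mathbf u$ is $x$-classical, i.e.\ satisfies a Pearson-type equation $\mathcal{D}_q(\phi\mathbf u)=\mathcal{S}_q(\psi\mathbf u)$ with explicitly computed $\phi,\psi$ (built from $B_0,B_1,C_1,C_2$ and the data of \eqref{0.2Dq}), and then the explicit formulas (4.4)--(4.5) of \cite{KCDMJPArxiv2021a} for the recurrence coefficients of $x$-classical OPS are played against the general solutions of the difference equations. This is how the free constant $K_b$ is forced to vanish, how the case $k_1k_2\neq0$ is excluded (by the contradiction $|B_0+2u\phi'(0)|+|B_0-2u\phi'(0)|=0$ with $B_0\neq0$), and how $B_n=B_0q^n$ and $C_{n+1}=(1-q^{n+1})(1-cdq^n)/4$ with $c^2+d^2=2\alpha cd$ are finally pinned down. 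You would either need to reproduce this functional (distributional) argument, or supply some equivalent mechanism for removing the residual freedom; as written, the proposal stops exactly where the real work begins. Two smaller points: the verification that the candidate $B_n,C_n$ actually satisfy the remaining equations \eqref{eq4S}--\eqref{eq5S} must be done (the paper does it), and the conclusion should also account for the second branch $P_n(X)=Q_n(X;c,d|q^{-1})$ arising from the symmetric case $k_2=0$.
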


\begin{theorem}\label{T2}
The Chebyschev polynomials of the first kind and the continuous $q$-Jacobi polynomials are the only OPS satisfying \eqref{0.2Dq} for $\deg \pi=2$. 
\end{theorem}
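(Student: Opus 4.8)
The plan is to convert \eqref{0.2Dq} into a finite system of nonlinear recurrence relations for the recurrence coefficients of $(P_n)_{n\geq0}$ and then to integrate it. Throughout write the three-term recurrence relation as $xP_n=P_{n+1}+\beta_nP_n+\gamma_nP_{n-1}$ ($n\geq0$, $P_{-1}=0$, $\gamma_n\neq0$), put $\pi(x)=\pi_2x^2+\pi_1x+\pi_0$ with $\pi_2\neq0$, and set $[n]:=(q^{n/2}-q^{-n/2})/(q^{1/2}-q^{-1/2})$, so that $\mathcal D_q$ lowers degree by one with $\mathcal D_qx^n=[n]x^{n-1}+\cdots$. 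I would also bring in the companion averaging operator $\mathcal S_q$, defined by $(\mathcal S_qf)(x)=\tfrac12\big(\breve f(q^{1/2}z)+\breve f(q^{-1/2}z)\big)$, which satisfies $\mathcal D_qx=1$, $\mathcal S_qx=\tfrac12(q^{1/2}+q^{-1/2})x$, and the Leibniz-type identities $\mathcal D_q(fg)=\mathcal S_qf\,\mathcal D_qg+\mathcal D_qf\,\mathcal S_qg$ and $\mathcal S_q(fg)=\mathcal S_qf\,\mathcal S_qg+\tfrac14(q^{1/2}-q^{-1/2})^2(x^2-1)\,\mathcal D_qf\,\mathcal D_qg$.

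The first, routine, step is to read off the elementary consequences of \eqref{0.2Dq}. Comparing the coefficients of $x^{n+1},x^n,x^{n-1}$ on both sides (using $P_n=x^n-(\beta_0+\cdots+\beta_{n-1})x^{n-1}+\cdots$) gives $a_n=\pi_2[n]$, which is $\neq0$ for $n\geq1$, and expresses $b_n$ and $c_n$ explicitly through $\beta_0,\dots,\beta_n$, $\gamma_1,\dots,\gamma_n$, $q$ and the $\pi_j$. Since $(a_nx+b_n)P_n=a_nP_{n+1}+(a_n\beta_n+b_n)P_n+a_n\gamma_nP_{n-1}$, relation \eqref{0.2Dq} takes the form of a genuine structure relation $\pi\,\mathcal D_qP_n=a_nP_{n+1}+\widetilde b_nP_n+\widetilde c_nP_{n-1}$ with $\widetilde b_n=a_n\beta_n+b_n$ and $\widetilde c_n=a_n\gamma_n+c_n$.

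The crucial step is to close the system. Applying $\mathcal D_q$ to the three-term recurrence yields $\tfrac12(q^{1/2}+q^{-1/2})\,x\,\mathcal D_qP_n+\mathcal S_qP_n=\mathcal D_qP_{n+1}+\beta_n\mathcal D_qP_n+\gamma_n\mathcal D_qP_{n-1}$; multiplying by $\pi$, replacing every $\pi\,\mathcal D_qP_k$ by the structure relation, and re-expanding the products $x\,P_j$ by the recurrence, one arrives at an identity $\pi(x)\,\mathcal S_qP_n=\sum_{j=n-2}^{n+2}\varepsilon_{n,j}P_j$ whose coefficients $\varepsilon_{n,j}$ are explicit in $\beta,\gamma,q,\pi_j$. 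Since $\mathcal S_qP_n$ has degree $n$, the right-hand side must be divisible by $\pi$; imposing this (equivalently, that the right-hand side vanishes at the two zeros of $\pi$) determines $\mathcal S_qP_n$ exactly — as a combination of $P_n,P_{n\pm1},P_{n\pm2}$ — and produces two scalar relations for each $n$. Substituting this expression for $\mathcal S_qP_n$, together with the expressions for $\mathcal D_qP_n$ and $(x^2-1)\mathcal D_qP_n$ in the basis $(P_j)$ obtained from the relations already at hand, into the $\mathcal S_q$-analogue of the recurrence, namely $\mathcal S_q(xP_n)=\tfrac12(q^{1/2}+q^{-1/2})\,x\,\mathcal S_qP_n+\tfrac14(q^{1/2}-q^{-1/2})^2(x^2-1)\,\mathcal D_qP_n$, and equating coefficients, one eliminates $a_n,b_n,c_n$ and is left with a system of nonlinear recurrences involving only $\beta_n,\gamma_n,q$ and $\pi_0,\pi_1,\pi_2$.

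Integrating that system is the main obstacle and is where essentially all of the work lies. The generic branch has $\beta_n$ of the form $\tfrac12(Aq^n+A^{-1}q^{-n})$ plus a lower-order correction, and $\gamma_n$ a ratio of products of factors $1-Bq^{n+c}$ over factors $1-Cq^{2n+d}$, with the parameters determined by the zeros of $\pi$; comparison with the recurrence coefficients displayed in the Introduction then identifies $(P_n)_{n\geq0}$ as the continuous $q$-Jacobi polynomials $\widehat P_n^{(a,b)}(\cdot\,|q)$ for a suitable pair $(a,b)$. All the remaining branches must be analysed separately — when the two zeros of $\pi$ coincide, when a denominator in the would-be rational expressions for $\beta_n$ or $\gamma_n$ vanishes for some $n$, when $\pi_0$ or $\pi_1$ is exceptional, or when the $q$-numbers force the system to collapse — and each of them must be shown either to be impossible (it would violate $\gamma_n\neq0$, $c_n\neq0$, or regularity of the functional) or to reduce to $\beta_n\equiv0$, $\gamma_1=\tfrac12$, $\gamma_n=\tfrac14$ for $n\geq2$, that is, to the Chebyshev polynomials of the first kind; for these \eqref{0.2Dq} holds with $\pi(x)=x^2-1$, as one checks at once from $\mathcal D_qT_n=[n]U_{n-1}$ and $(x^2-1)U_{n-1}=xT_n-T_{n-1}$. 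The converse statement, that both families indeed satisfy \eqref{0.2Dq} with $\deg\pi=2$, is \cite[Theorem 15.5.2]{IsmailBook2005} for the continuous $q$-Jacobi polynomials and the identity just quoted for the Chebyshev ones. The genuinely delicate point is not any individual manipulation but the organisation of this case analysis, so that the divisibility and coefficient conditions are used exhaustively and no degenerate sub-case is missed.
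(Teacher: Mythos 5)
Your reduction step is essentially the paper's Lemma \ref{system-in-genral-form-}: applying $\mathcal D_q$ and $\mathcal S_q$ to the three-term recurrence, using the product rules, and expanding in the basis $(P_j)$ does produce a closed nonlinear difference system for the recurrence coefficients (the paper's \eqref{eq1S-eq2S}--\eqref{eq5S}), and your converse checks (Ismail's Theorem 15.5.2 for continuous $q$-Jacobi, $\mathcal D_qT_n=[n]U_{n-1}$ for Chebyshev) are fine. But the proof stops exactly where the theorem begins: you state that ``integrating that system is the main obstacle and is where essentially all of the work lies'' and then only describe the expected shape of the generic solution and assert that all degenerate branches either die or collapse to Chebyshev. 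No mechanism is given that forces solutions of the system to have the claimed form, and this is precisely the content that has to be proved; the uniqueness claim of Theorem \ref{T2} is not established by exhibiting one consistent branch and promising an exhaustive case analysis. (A smaller point: your ``divisibility by $\pi$'' step yields no new scalar conditions, since the identity $\pi\,\mathcal S_qP_n=\sum_{j=n-2}^{n+2}\varepsilon_{n,j}P_j$ is derived, hence automatically divisible; evaluating at the zeros of $\pi$ gives relations involving values $P_j$ at those points, not the per-$n$ relations among $\beta_n,\gamma_n$ you want. The usable relations come from comparing coefficients of a vanishing combination of $P_{n+3},\dots,P_{n-3}$, as in the paper.)

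What the paper uses to make the integration possible, and what your plan has no counterpart for, is Lemma \ref{conject-is-dx-classical}: from \eqref{0.2Dq} one first proves, via the dual basis, that the orthogonality functional ${\bf u}$ satisfies a Pearson-type equation $\mathcal D_q(\phi{\bf u})=\mathcal S_q(\psi{\bf u})$ with $\phi,\psi$ given explicitly by \eqref{value-d-e-a}--\eqref{coef-a-b-conject-classical} in terms of $B_0,B_1,C_1,C_2$ and the structure coefficients. This puts $(P_n)$ in the $x$-classical class, for which the companion paper \cite{KCDMJPArxiv2021a} provides closed formulas ((4.4)--(4.5) there) for $B_n$ and $C_n$ in terms of the coefficients of $\phi$ and $\psi$, together with regularity conditions. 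The paper then plays these closed forms off against the nonlinear system: the non-degeneracy statement of Lemma \ref{case-pi-2-unicity} ($\widehat a\,\widehat b\,(1\pm2\mathfrak a u)\neq0$), the dichotomy $\phi'(0)=0$ versus $\phi'(0)\neq0$, and the low-index constraints \eqref{b1-pi-2-case-2}--\eqref{c2-b2-pi-2-case-2} are what actually pin the parameters down to the Chebyshev data ($B_n=0$, $C_1=1/2$, $C_{n+1}=1/4$) or to the continuous $q$-Jacobi data \eqref{final-and-exact-Bn-pi-2}, \eqref{final-exact-Cn-pi-2}, with the competing branch \eqref{data-for-Chebyschev} eliminated by a concrete contradiction at $n=3$. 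Without this Pearson-equation input (or some substitute supplying closed-form candidates for $B_n,C_n$), the direct resolution of the nonlinear system you propose is not a routine case analysis but an open-ended computation, so the argument as written has a genuine gap at its central step.
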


The two previous theorems give more than a simple answer to Conjecture \ref{IsmailConj2478}. In fact, we now know, for instance, that Al-Salam Chihara polynomials appear only when $\deg\pi = 1 $ and for $c/d=q^ {\pm 1/2} $, or that continuous $ q $-Jacobi polynomials are exclusively related to the case $ \deg\pi = 2 $, or that Chebyschev polynomials of the first type are the only ``limiting case'' that satisfy Conjecture \ref{IsmailConj2478}. 

\begin{remark} Koornwinder \cite[Section 4]{K07} gave a structure formula (and resulting
lowering formula) for Askey-Wilson polynomials. Necessarily, because of
Conjecture \ref{IsmailConj2478} settled in the present paper, the left-hand side in
Koornwinder's formula has a more general form than in \eqref{0.2Dq}.
With the usual parameters $a$, $b$, $c$, $d$ for Askey-Wilson polynomials,
substitution of $c=q^{1/2}$, $d=-q^{1/2}$ in these formulas gives similar formulas
for continuous q-Jacobi polynomials with $q$ replaced by $q^2$
(see \cite[Section 5]{K07}). In a personal communication Koornwinder pointed out
to us that Ismail's and his formulas in the continuous q-Jacobi case
are the same modulo polynomial multiples of the second order q-difference
formula and the three-term recurrence relation for continuous
q-Jacobi polynomials.\end{remark}

Since this work is an application of the theory developed in our previous paper \cite{KCDMJPArxiv2021a}, in the next sections, we  suppose that the reader has  \cite{KCDMJPArxiv2021a} at hand and we shall use its notation, definitions, and results.

\section{Preliminary results}
Taking $e^{i\theta}=q^s$ in \eqref{0.3}, $\mathcal{D}_q$ reads
\begin{equation*}
\mathcal{D}_q f(x(s))= \frac{f\big(x(s+\frac{1}{2})\big)-f\big(x(s-\frac{1}{2})\big)}{x(s+\frac{1}{2})-x(s-\frac{1}{2})},
\quad x(s)=\mbox{$\frac12$}(q^s +q^{-s}).
\end{equation*}
We define an operator $\mathcal{S}_q:\mathbb{R}[x]\to\mathbb{R}[x]$ by
\begin{equation*}
\mathcal{S}_q f(x(s))=\frac{f\big(x(s+\frac{1}{2})\big)+f\big(x(s-\frac{1}{2})\big)}{2}.
\end{equation*}
Hereafter, we denote $X=x(s)=(q^{s}+q^{-s})/2$ with $0<q<1$. Recall that a monic OPS $(P_n)_{n\geq 0}$ satisfies the following three term recurrence relation (TTRR):
\begin{align}
XP_n(X)=P_{n+1}(X)+B_nP_n(X)+C_nP_{n-1}(X), \quad n=0,1,2,\dots,\label{TTRR}
\end{align}
with $P_{-1}(X)=0$ and $B_n\in \mathbb{C}$ and $C_{n+1} \in \mathbb{C}\setminus \left\lbrace 0\right\rbrace$ for each $n=0,1,2,\ldots$. We start by showing that all monic OPS, $(P_n)_{n\geq 0}$, satisfying \eqref{0.2Dq} belongs to a well known class of OPS and then we prove that the coefficients of the associated TTRR satisfy a system of non linear equations that will be solved in the next section.

\begin{lemma}\label{conject-is-dx-classical}
Let ${\bf u} \in \mathcal{P}^* $ be a regular functional such that its corresponding monic OPS $(P_n)_{n\geq0}$ satisfies \eqref{0.2Dq} subject to the condition $c_n\neq 0$ for $n=1,2,\ldots$. Then ${\bf u}$ is $x$-classical; that is $\mathcal{D}_q(\phi {\bf u})=\mathcal{S}_q(\psi {\bf u})$ with $\deg \phi \leq 2$ and $\deg \psi \leq 1$. Moreover, $\psi$ and $\phi$ are polynomials given by
\begin{align}\label{value-d-e-a}
\psi(X)=X-B_0,\quad \phi(X)=(\mathfrak{a}X-\mathfrak{b})(X-B_0 )-(\mathfrak{a}+\alpha)C_1\; ,
\end{align}
where
\begin{align}\label{coef-a-b-conject-classical}
\mathfrak{a}= \frac{(a_2 C_2 +c_2)C_1}{(a_1 C_1 +c_1)C_2} -\alpha,
\quad \mathfrak{b}= -B_0 +(\mathfrak{a}+\alpha)B_1 -\frac{b_1 +a_1 B_1}{c_1 +a_1 C_1}C_1.
\end{align}
(Here $B_0$, $B_1$, $C_1$, and $C_{2}$ are coefficients of the TTRR \eqref{TTRR} satisfied by $(P_n)_{n\geq0}$.)
\end{lemma}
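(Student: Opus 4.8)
The plan is to read off the polynomials $\phi$ and $\psi$ by comparing the lowest-degree instances of \eqref{0.2Dq} against the general theory of $x$-classical functionals from \cite{KCDMJPArxiv2021a}. First I would recall that, by the results in our previous paper, a regular functional ${\bf u}$ is $x$-classical precisely when its monic OPS $(P_n)_{n\geq 0}$ admits a structure relation of the form $\phi\,\mathcal{D}_qP_n = \sum$ (finitely many $P_k$'s with controlled degrees), equivalently when $(\mathcal{D}_q P_n)_{n\geq0}$ is itself (up to the standard $\alpha$-shift in the lattice) an orthogonal-type family; in \cite{KCDMJPArxiv2021a} this is phrased through the functional equation $\mathcal{D}_q(\phi{\bf u}) = \mathcal{S}_q(\psi{\bf u})$ with $\deg\phi\le 2$, $\deg\psi\le 1$. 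So the first real step is to show that \eqref{0.2Dq}, which expresses $\pi\mathcal{D}_qP_n$ as a combination of $P_n$ and $P_{n-1}$, forces such a structure relation. The natural route is to apply $\mathcal{D}_q$ (or rather to use the product rules for $\mathcal{D}_q$ and $\mathcal{S}_q$ on the lattice $X$, as developed in \cite{KCDMJPArxiv2021a}) to \eqref{0.2Dq}, use that $\mathcal{D}_qP_n$ has degree $n-1$ and leading coefficient the $q$-number $[n]_q$-type constant, and conclude that the only way the identity can be consistent for all $n$ is if ${\bf u}$ satisfies the stated functional equation; here one must be slightly careful because $\pi$ could be a "wrong" quadratic, so part of the argument is identifying the correct $\phi$ (which need not equal $\pi$).

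Next I would pin down $\psi$ and $\phi$ explicitly using \eqref{0.2Dq} for small $n$. Taking $n=1$ in \eqref{0.2Dq} and using $P_0=1$, $\mathcal{D}_qP_1 = $ constant, $P_1(X)=X-B_0$, one gets a linear relation among $\pi$, $a_1$, $b_1$, $c_1$, $B_0$; taking $n=2$ and using $P_2(X)=(X-B_1)(X-B_0)-C_1$ together with the TTRR \eqref{TTRR} gives a second relation involving $a_2$, $c_2$, $C_1$, $C_2$, $B_1$. From the general theory, $\psi$ is determined up to normalization by the action of the "mean" operator on the first moments, and one expects $\psi(X)=X-B_0$ on the nose (this is the $x$-classical analogue of the classical fact $\psi = $ the degree-one polynomial whose root is the first recurrence coefficient). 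For $\phi$, writing $\phi(X)=\mathfrak{a}X^2 + (\text{lower})$ and matching the $n=1,2$ data yields the value of $\mathfrak{a}$ in terms of $(a_1C_1+c_1)$, $(a_2C_2+c_2)$, $C_1$, $C_2$ and the lattice parameter $\alpha$, and then matching the linear coefficient produces $\mathfrak{b}$ as in \eqref{coef-a-b-conject-classical}; the constant term $-(\mathfrak{a}+\alpha)C_1$ then comes out of the compatibility condition $\phi = \psi\cdot(\text{something}) - (\text{const})C_1$ forced by evaluating at $X=B_0$. I would organize this as: (i) express $\phi$ in the factored form $\phi(X)=(\mathfrak{a}X-\mathfrak{b})(X-B_0) + \gamma$ (legitimate since $\psi(X)=X-B_0$), (ii) determine $\gamma$ by plugging $X=B_0$ and using the $n=1$ instance, (iii) determine $\mathfrak{a}$ and $\mathfrak{b}$ from the $n=1$ and $n=2$ instances.

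I expect the main obstacle to be the first step, namely proving that \eqref{0.2Dq} actually implies the $x$-classical functional equation with $\deg\phi\le 2$. The degree bookkeeping is delicate: $\pi$ may have degree $0$, $1$, or $2$, while the genuine $\phi$ attached to ${\bf u}$ always has a specific relationship to $\pi$, and one has to rule out or absorb the possibility that $\pi$ is not itself admissible. The cleanest way I know is to transcribe \eqref{0.2Dq} into a functional identity: multiply by an arbitrary polynomial, apply the duality between $\mathcal{D}_q$, $\mathcal{S}_q$ and their adjoints, and use the completeness of $(P_n)$ to extract $\mathcal{D}_q(\phi{\bf u}) = \mathcal{S}_q(\psi{\bf u})$; this is exactly the kind of manipulation systematized in \cite{KCDMJPArxiv2021a}, so I would lean heavily on a proposition from there (e.g. the characterization of $x$-classical functionals via a single structure relation on the OPS) rather than redo it. Once that is in place, the extraction of the formulas \eqref{value-d-e-a}–\eqref{coef-a-b-conject-classical} is a finite, essentially linear-algebra computation with the first three recurrence coefficients, and the nonvanishing hypotheses $c_n\neq0$ (together with $C_1,C_2\neq0$ from regularity) guarantee that the denominators appearing in \eqref{coef-a-b-conject-classical} are nonzero so the expressions are well-defined.
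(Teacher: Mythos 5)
There is a genuine gap, and it sits exactly where you predicted the main obstacle would be. Your step 1 defers the implication ``\eqref{0.2Dq} $\Rightarrow$ $\mathcal{D}_q(\phi{\bf u})=\mathcal{S}_q(\psi{\bf u})$'' to a quotable ``characterization of $x$-classical functionals via a single structure relation'' from \cite{KCDMJPArxiv2021a}, but no such result is available there (that paper supplies operator identities and consequences of the functional equation, not a converse of this type), and this implication is precisely the content of the lemma, so it cannot be outsourced. Moreover, the mechanism you sketch --- dualize \eqref{0.2Dq} against the dual basis and use completeness --- can only produce information about $\mathcal{D}_q(\pi{\bf u})$; it yields the relation $\mathcal{D}_q(\pi{\bf u})=R_1{\bf u}$ with $R_1$ proportional to $X-B_0$ (this is \eqref{def-R1}, and it is where $\psi(X)=X-B_0$ really comes from), but it cannot by itself reach a target equation whose right-hand side carries $\mathcal{S}_q$. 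The missing ingredients are: (i) a second functional relation $\mathcal{S}_q(\pi{\bf u})=R_2{\bf u}$ with $\deg R_2\le2$, which requires first expanding $\pi\,\mathcal{S}_qP_n$ in the basis $P_{n+2},\dots,P_{n-2}$ (obtained by applying $\mathcal{D}_q$ to the TTRR \eqref{TTRR} via the commutation rule \cite[(2.28)]{KCDMJPArxiv2021a} and then using \eqref{0.2Dq}; this is \eqref{piSx-eq} and \eqref{def-R2}); and (ii) the elimination of $\pi$ by combining the two relations through the identities \cite[(2.34), (2.47)]{KCDMJPArxiv2021a}, which is what actually constructs $\phi=-\tfrac{C_1}{\alpha(c_1+a_1C_1)}\bigl(R_2-\texttt{U}_1R_1\bigr)$, a polynomial genuinely different from $\pi$. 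Your small-$n$ matching in step 2 is fine as a way of reading off $\mathfrak{a}$, $\mathfrak{b}$ and the constant term (the paper does the equivalent by computing $\phi(0),\phi'(0),\phi''(0)$ from this explicit $\phi$), but it only makes sense once existence and an explicit formula for $\phi$ are in hand.

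A secondary but real issue: you assert that $c_n\neq0$ together with $C_1,C_2\neq0$ makes the denominators in \eqref{coef-a-b-conject-classical} nonzero, but the denominator is $c_1+a_1C_1$, which could a priori vanish even with $c_1\neq0$ and $C_1\neq0$. The paper rules this out by a separate argument: if $c_1+a_1C_1=0$ then $\mathcal{D}_q(\pi{\bf u})={\bf 0}$, hence $\pi{\bf u}={\bf 0}$, contradicting $\pi\neq0$ and the regularity of ${\bf u}$. Some argument of this kind must be supplied; it does not follow from the stated nonvanishing hypotheses alone.
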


\begin{proof}
Let $({\bf a}_n)_{n\geq0}$ be the dual basis associated to the monic OPS $(P_n)_{n\geq0}$. We claim that 
\begin{align}\label{def-R1}
\mathcal{D}_q (\pi {\bf u}) = R_1 {\bf u},\quad R_1(X) = -\frac{a_1 C_1 +c_1}{C_1}(X-B_0),
\end{align}
with $a_1C_1+c_1\neq 0$. Indeed, using \eqref{0.2Dq} and \eqref{TTRR}, we have
\begin{align*}
\left\langle \mathcal{D}_q (\pi {\bf a}_0),P_j \right\rangle &=-\left\langle {\bf a}_0,\pi \mathcal{D}_q P_j \right\rangle 
=-a_j \delta_{0,j+1}-(a_j B_j +b_j)\delta_{0,j}-(c_j +a_j C_j)\delta_{1,j},
\end{align*}
for fixed $j \in \mathbb{N}_0$. Taking $n=0$ in \eqref{0.2Dq}, we find $a_0 =b_0=0$, and since 
$\left\langle {\bf u},P_n ^2 \right\rangle {\bf a}_n =P_n {\bf u}$ and 
$C_{n+1}=\left\langle {\bf u},P_{n+1} ^2 \right\rangle /\left\langle {\bf u},P_n ^2 \right\rangle$, we obtain
$$
\mathcal{D}_q (\pi {\bf a}_0) =\sum_{j=0} ^{\infty} \left\langle \mathcal{D}_q (\pi {\bf a}_0),P_j \right\rangle {\bf a}_j =-(c_1 +a_1 C_1){\bf a}_1.
$$ 
If $c_1 +a_1 C_1 =0$, then ${\bf D}_x (\pi {\bf u})=0$, hence $0=\left\langle \mathcal{D}_q (\pi {\bf u}),f \right\rangle=-\left\langle \pi{\bf u},  \mathcal{D}_q f \right\rangle$, $\forall f\in \mathcal{P}$. This implies $\pi {\bf u}={\bf 0}$. But this is impossible, since $\pi \not= 0$ and ${\bf u}$ is regular. So $c_1 +a_1 C_1 \neq 0$. Hence \eqref{def-R1} holds. Applying $\mathcal{D}_q$ to both sides of (\ref{TTRR}), and using \cite[(2.28)]{KCDMJPArxiv2021a}, yields
\begin{align*}
\mathcal{S}_q P_n(X) =-\alpha X\mathcal{D}_q P_n(X) +
\mathcal{D}_q P_{n+1}(X) +B_n \mathcal{D}_q P_n(X) +C_n \mathcal{D}_q P_{n-1}(X).
\end{align*}
Multiplying both sides of this equality by $\pi(X)$ and using (\ref{0.2Dq}) and (\ref{TTRR}), we obtain
\small{
\begin{align}\label{piSx-eq}
\pi(X)\mathcal{S}_q P_n (X)=r_n ^{[1]} P_{n+2}(X)+r_n ^{[2]} P_{n+1}(X)+r_n ^{[3]}P_n (X)+r_n ^{[4]}P_{n-1}(X)+r_n ^{[5]}P_{n-2}(X) 
\end{align}
}
for each $n=0,1,2,\ldots$, where
\begin{align*}
r_n ^{[1]}&= a_{n+1}-\alpha a_n,\\
 r_n ^{[2]}&= g_{n+1}-\alpha g_n+a_n(B_n-\alpha B_{n+1}),\\
r_n ^{[3]}&= s_{n+1}-\alpha s_n+g_n(1-\alpha) B_{n}+a_{n-1}C_n-\alpha a_nC_{n+1},\\
r_n ^{[4]}&= (g_{n-1}-\alpha g_n)C_{n}+s_n(B_n-\alpha B_{n-1}),\\
 r_n ^{[5]}&= C_ns_{n-1}-\alpha C_{n-1}s_n,
\end{align*}
and $g_n =b_n +a_n B_n$, $s_n =c_n +a_n C_n$.
For a fixed $j\in \mathbb{N}_0$, using \eqref{piSx-eq} we obtain 
$$
\left\langle \mathcal{S}_q (\pi {\bf a}_0),P_j  \right\rangle = \left\langle {\bf a}_0, \pi \mathcal{S}_q P_j\right\rangle 
= r_j ^{[1]}\delta_{0,j+2} +r_j ^{[2]}\delta_{0,j+1} +r_j ^{[3]}\delta_{0,j}+r_j ^{[4]}\delta_{0,j-1}+r_j ^{[5]}\delta_{0,j-2}.
$$
Therefore, 
$$  \mathcal{S}_q (\pi {\bf a}_0) =\sum_{j=0} ^{\infty} \left\langle \mathcal{S}_q (\pi {\bf a}_0),P_j  \right\rangle {\bf a}_j 
= r_0 ^{[3]} {\bf a}_0 +r_1 ^{[4]} {\bf a}_1 +r_2 ^{[5]} {\bf a}_2, $$ 
and so
\begin{align}\label{def-R2}
\mathcal{S}_q (\pi {\bf u}) = R_2 {\bf u},\quad R_2(X) = r_0 ^{[3]} +\frac{r_1 ^{[4]}}{C_1} P_1 (X)+\frac{r_2 ^{[5]}}{C_2 C_1}P_2 (X).
\end{align}
Next, on the first hand, applying successively \eqref{def-R2}, \cite[(2.47)]{KCDMJPArxiv2021a} and \eqref{def-R1}, we obtain
\begin{align}\label{first-hand}
\mathcal{D}_q (R_2 {\bf u}) = \frac{2\alpha ^2 -1}{\alpha} \mathcal{D}_q (R_1 {\bf u}) +\frac{\texttt{U}_1}{\alpha} \mathcal{D}_q  (R_1 {\bf u}).
\end{align}
On the other hand, using \cite[(2.34)]{KCDMJPArxiv2021a} with $f=\texttt{U}_1$, we obtain 
$$\alpha \mathcal{D}_q (\texttt{U}_1 R_1 {\bf u})
=\texttt{U}_1 \mathcal{D}_q (R_1 {\bf u}) +(\alpha^2 -1)\mathcal{S}_q (R_1 {\bf u}).$$
Thus, from \eqref{first-hand} we obtain
$
\mathcal{D}_q \Big((R_2 -\texttt{U}_1 R_1){\bf u}\Big)=\mathcal{S}_q (\alpha R_1 {\bf u}).
$
This leads us to define
$$
\psi(X)=X-B_0 ,\quad \phi(X)=-\frac{C_1}{\alpha (c_1 +a_1C_1)}\Big( R_2 (X)-\texttt{U}_1 (X)R_1 (X)\Big).
$$
Clearly, $\deg \psi =1$, $\deg \phi \leq 2$ and  $\mathcal{D}_q(\phi {\bf u})=\mathcal{S}_q (\psi {\bf u})$.
Finally, since $a_0 =0=b_0$, and setting (without lost of generality) $c_0=0$ and $C_0=0$, we have
\begin{align*}
&\phi''(0)
=\frac{2(c_2 +a_2C_2)C_1}{(c_1 +a_1C_1)C_2}-2\alpha,\;\phi'(0)= (1-\mathfrak{a})B_0  -(\mathfrak{a}+\alpha)B_1 +\frac{b_1 +a_1 B_1}{c_1 +a_1 C_1}C_1,  \\
&\phi(0)=-(\mathfrak{a}+\alpha)C_1 -B_0 \left(  B_0 -(\mathfrak{a}+\alpha)B_1 +\frac{b_1 +a_1 B_1}{c_1 +a_1 C_1}C_1 \right),
\end{align*} 
and the proof is complete.
\end{proof}

\begin{lemma}\label{system-in-genral-form-}
Let $(P_n)_{n\geq0}$ be a monic OPS satisfying \eqref{0.2Dq}. Then the coefficients $B_n$ and $C_{n}$ of the TTRR \eqref{TTRR} satisfied by $(P_n)_{n\geq0}$ fulfill the following system of difference equations: 
\begin{align}
&\label{eq1S-eq2S} a_{n+2}-2\alpha a_{n+1}+a_n=0,\; t_{n+2} -2\alpha t_{n+1}+t_{n} =0,\\
&\nonumber \quad\quad\quad\quad\quad t_n =\frac{c_n}{C_n} =k_1 q^{n/2}+k_2q^{-n/2},  \\
&r_{n+3}B_{n+2} -(r_{n+2} +r_{n+1})B_{n+1}  +r_n B_n =0,\quad r_n= t_n +a_n -a_{n-1},  \label{eq3S} \\
&\label{eq4S} r_n \left( B_{n} ^2 -2\alpha B_{n}B_{n-1} +B_{n-1}^2   \right)\\
&\nonumber \quad \quad\quad\quad\quad=(r_{n+1}+r_{n+2})(C_{n+1}-1/4) -2(1+\alpha)r_n(C_n -1/4)\\
 &\nonumber \quad\quad\quad\quad\quad+(r_{n-1}+r_{n-2})(C_{n-1} -1/4) \\
&\label{eq5S} (1 -\alpha^2)b_n=2(1-\alpha)(a_nB_n+b_n)B_n ^2+(t_{n+1}\\
&\nonumber  \quad\quad\quad\quad\quad+a_{n+1}-a_{n+2})B_{n+1}C_{n+1}+(t_n+a_{n-1}-a_{n-2})B_{n-1}C_n \\
&\nonumber  \quad\quad\quad\quad\quad+\Big[(2a_n-a_{n+2}-a_{n-1})C_{n+1}+(2a_n -a_{n+1}-a_{n-2})C_n\\
&\nonumber \quad\quad\quad\quad\quad +(1-2\alpha)(c_n+c_{n+1})+(\alpha ^2 -1)a_n  \Big]B_n +2(b_n -\alpha b_{n+1} )C_{n+1}\\
&\nonumber \quad \quad\quad\quad\quad +2(b_n -\alpha b_{n-1})C_{n}.
\end{align}
In addition, the following relations hold:
\begin{align}
&b_n =\gamma_n,\quad c_n =\left(b_n -b_{n-1}  \right)\sum_{j=0} ^{n-1} B_j  +\pi(0)b_n,\quad\quad~ ~~\mbox{\rm if}~~ deg\,\pi=1, \label{power-pi1}\\
&a_n=\gamma_n,\quad b_n =\left(a_n -a_{n-1}  \right)\sum_{j=0} ^{n-1} B_j +\pi'(0)a_n,\quad ~~\mbox{\rm if}~~deg\,\pi=2. \label{power-pi2}
\end{align}
\end{lemma}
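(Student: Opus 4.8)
The plan is to extract the whole system purely by comparing coefficients in two polynomial identities. Throughout one may assume, after multiplying \eqref{0.2Dq} by a constant, that $\pi$ is monic; and one uses two elementary facts (both recorded in \cite{KCDMJPArxiv2021a}): for $m\ge 1$, $\mathcal{D}_q X^m=\gamma_m X^{m-1}+(\text{terms of degree }m-3,\,m-5,\dots)$ --- in particular \emph{no} term of degree $m-2$ --- with $\gamma_0=0$ and $\gamma_{m+1}-2\alpha\gamma_m+\gamma_{m-1}=0$; and that the monic OPS has the expansion $P_n(X)=X^n-\big(\sum_{j=0}^{n-1}B_j\big)X^{n-1}+\cdots$.

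The relations \eqref{power-pi1}--\eqref{power-pi2} are the easy part, and use only the two highest-degree coefficients of \eqref{0.2Dq}. If $\deg\pi\le 1$ the left-hand side of \eqref{0.2Dq} has degree at most $n$ while $(a_nX+b_n)P_n$ has degree $n+1$ unless $a_n=0$; hence $a_n=0$, and when moreover $\deg\pi=1$ the coefficient of $X^n$ gives $b_n=\gamma_n$. If $\deg\pi=2$ the coefficient of $X^{n+1}$ gives $a_n=\gamma_n$. In either case, comparing the next coefficient --- using that $\mathcal{D}_q P_n$ carries no term of degree $n-2$ other than the contribution $-\big(\sum_{j<n}B_j\big)\gamma_{n-1}X^{n-2}$ coming from the subleading term of $P_n$, and that $\gamma_n-\gamma_{n-1}$ equals $b_n-b_{n-1}$ (resp. $a_n-a_{n-1}$) --- yields precisely the stated expression for $c_n$ (resp. $b_n$), with $\pi(0)$ (resp. $\pi'(0)$) entering through the lower-order coefficients of $\pi$.

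For the core system \eqref{eq1S-eq2S}--\eqref{eq5S} the key is a second identity \emph{at the $\mathcal{S}_q$-level}. Applying $\mathcal{S}_q$ to \eqref{TTRR} and invoking the Leibniz rule $\mathcal{S}_q(XP_n)=\alpha X\,\mathcal{S}_q P_n+(\alpha^2-1)(X^2-1)\,\mathcal{D}_q P_n$ (a specialization of the product rule in \cite{KCDMJPArxiv2021a}) gives
\begin{align*}
\alpha X\,\mathcal{S}_q P_n+(\alpha^2-1)(X^2-1)\,\mathcal{D}_q P_n=\mathcal{S}_q P_{n+1}+B_n\,\mathcal{S}_q P_n+C_n\,\mathcal{S}_q P_{n-1}.
\end{align*}
Multiplying by $\pi$, substituting \eqref{piSx-eq} into each factor $\pi\,\mathcal{S}_q P_m$ and \eqref{0.2Dq} into $\pi\,\mathcal{D}_q P_n$, and re-expanding all products in the monic basis by repeated use of \eqref{TTRR}, one arrives at an identity of the form $\sum_{k=-3}^{3}(\cdots)\,P_{n+k}=0$, valid for every $n$; equating each coefficient to zero gives seven scalar relations among $a_n,b_n,c_n,B_n,C_n$. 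The coefficient of $P_{n+3}$ reads $\alpha r_n^{[1]}+(\alpha^2-1)a_n=r_{n+1}^{[1]}$ with $r_n^{[1]}=a_{n+1}-\alpha a_n$, which is exactly the first recurrence in \eqref{eq1S-eq2S}; the coefficient of $P_{n-3}$, after dividing by $C_nC_{n-1}C_{n-2}$, using $s_n/C_n=a_n+t_n$, and subtracting the $a$-recurrence just obtained, becomes the second recurrence in \eqref{eq1S-eq2S}; and the coefficients of $P_{n+2}$, $P_{n+1}$, $P_n$, $P_{n-1}$, $P_{n-2}$ yield, after simplification (and reindexing), equations \eqref{eq3S}, \eqref{eq4S} --- the quadratic combination $B_n^2-2\alpha B_nB_{n-1}+B_{n-1}^2$ and the shifted coefficients $C_j-\tfrac14$ arising naturally from the correction term $(\alpha^2-1)(X^2-1)\,\mathcal{D}_q P_n$ --- and \eqref{eq5S}, two of these five coefficient equations being consequences of the rest.

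The only genuine difficulty is bookkeeping: the computation carries index shifts of up to three units, mixes the product rules for $\mathcal{D}_q(\pi f)$ and $\mathcal{S}_q(\pi f)$, and requires re-expanding every intermediate polynomial in the monic basis via \eqref{TTRR}; checking that the raw coefficient equations collapse into the compact forms \eqref{eq1S-eq2S}--\eqref{eq5S}, uniformly for $\deg\pi\in\{0,1,2\}$, is where care is needed, but no tool beyond those already employed in the proof of Lemma~\ref{conject-is-dx-classical} is required.
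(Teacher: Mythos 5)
Your proposal follows essentially the same route as the paper: apply $\mathcal{S}_q$ to \eqref{TTRR} via the rule $\mathcal{S}_q(Xf)=\alpha X\mathcal{S}_qf+(\alpha^2-1)(X^2-1)\mathcal{D}_qf$, multiply by $\pi$, substitute \eqref{0.2Dq} and \eqref{piSx-eq}, expand in $P_{n+3},\dots,P_{n-3}$ and equate coefficients (your explicit checks of the $P_{n+3}$ and $P_{n-3}$ coefficients are correct and give \eqref{eq1S-eq2S}), while \eqref{power-pi1}--\eqref{power-pi2} come, as in the paper, from comparing the two leading coefficients of \eqref{0.2Dq} using $P_n(X)=X^n-\bigl(\sum_{j<n}B_j\bigr)X^{n-1}+\cdots$ and the parity of $\mathcal{D}_qX^m$. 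The remaining five coefficient identities collapsing (after shifts and use of \eqref{eq1S-eq2S}) into \eqref{eq3S}--\eqref{eq5S} is exactly the bookkeeping the paper carries out through its intermediate equations \eqref{seq3S}--\eqref{seq7S}, so the proposal is correct and matches the paper's proof.
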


\begin{proof}
Applying the operator $\mathcal{S}_q$ to both sides of (\ref{TTRR}) and using \cite[(2.29)]{KCDMJPArxiv2021a}, we deduce
$\texttt{U}_2(X) \mathcal{D}_q P_n(X) + \alpha X\mathcal{S}_q P_n(X) =\mathcal{S}_q P_{n+1}(X) +B_n \mathcal{S}_q P_n(X) +C_n \mathcal{S}_q P_{n-1}(X).$
Multiplying both sides of this equality by $\pi(X)$ and then using successively \cite[(2.25)]{KCDMJPArxiv2021a} (for $\mathfrak{c}_3=0$ and $\mathfrak{c}_1=\mathfrak{c}_2=1/2$), (\ref{0.2Dq}), (\ref{piSx-eq}), and (\ref{TTRR}), we obtain
a vanishing linear combination of the polynomials $P_{n+3}, P_{n+2}, \dots,P_{n-3}$.
Thus, setting
$t_n=c_n/C_n$ for $n=1,2,3,\ldots$,  
after straightforward computations we obtain \eqref{eq1S-eq2S} together with the following equations:
\begin{align}
&\label{seq3S} (a_{n+1}-a_{n+2})B_{n+1} +(a_n -a_{n-1})B_n +b_{n+2}-2\alpha b_{n+1}+b_n =0 , \\
&\label{seq4S} (a_{n+1}-a_{n+2}-t_{n+2})B_{n+1} +(a_{n}-a_{n-1}+t_{n+1}+t_{n})B_n -t_{n-1} B_{n-1} \\ 
&\nonumber  \quad\quad  +b_{n+1}-2\alpha b_{n}+b_{n-1} =0 , \\
&\label{seq5S} (a_{n+1}-a_{n+2})B_{n+1}^2+2(1-\alpha)a_nB_n^2+(a_n-a_{n-1})B_n B_{n+1}+ (a_n -a_{n+2})C_{n+1}  \\ 
&\nonumber \quad +(b_{n+1}+b_n-2\alpha b_{n+1})B_{n+1} +(b_{n+1}+b_n -2\alpha b_{n} )B_n  +(a_n - a_{n-2})C_n\\ 
&\nonumber \quad +c_{n+2}-2\alpha c_{n+1}+c_n  = (1-\alpha ^2)a_n ,\\
&\label{seq6S} (2(1-\alpha)a_n+t_n)B_{n}^2 + (t_n +a_{n-1}-a_{n-2})B_{n-1}^2+(b_{n} +b_{n-1} -2\alpha b_{n})B_{n} \\ 
&\nonumber \quad +(a_n-t_{n-1}-t_{n+1}-a_{n+1})B_n B_{n-1} +(b_{n-1}+b_n -2\alpha b_{n-1} )B_{n-1} \\ 
&\nonumber \quad +(a_n-a_{n+2}-t_{n+2}-t_{n+1})C_{n+1}+( 2(1+\alpha)t_n+a_n-a_{n-2})C_n  \\
&\nonumber \quad -(t_{n-2}+t_{n-1})C_{n-1} +c_{n+1}-2\alpha c_{n}+c_{n-1} = (1-\alpha^2)(t_n+a_n)  ,\\
&\label{seq7S} 2(1-\alpha)a_nB_n^3 +2(1-\alpha)b_nB_{n}^2 +\left[ (2a_n -a_{n+2}-a_{n-1})C_{n+1}\right.\\
&\nonumber \quad \left. + (2a_n -a_{n+1}-a_{n-2})C_n  + c_{n+1}-2\alpha c_n +c_n -2\alpha c_{n+1}  - (1-\alpha^2) a_n \right] B_{n} \\ 
&\nonumber \quad +(c_{n+1}+a_{n+1}C_{n+1}-a_{n+2}C_{n+1})B_{n+1} +(c_n +a_{n-1}C_{n}-a_{n-2}C_n)B_{n-1} \\ 
&\nonumber \quad +2(b_n -\alpha b_{n+1})C_{n+1} +2(b_n -\alpha b_{n-1} )C_n   = (1 -\alpha^2)b_n .
\end{align}

\eqref{eq3S} (respectively, \eqref{eq4S}) is obtained by shifting $n$ to $n+1$ in \eqref{seq4S} (respectively, \eqref{seq6S}) and combining it with \eqref{seq3S} (respectively, \eqref{seq5S}) and by using \eqref{eq1S-eq2S}. \eqref{eq5S} follows from \eqref{eq1S-eq2S} and \eqref{seq7S}. Now suppose that $deg\pi =2$. Using \eqref{TTRR}, we may write \small{$$ P_n (X)=X^n -X^{n-1}\sum_{j=0} ^{n-1} B_j +w_nX^{n-2}+\cdots,$$} for some complex sequence $(w_n)_{n\geq0}$. Using \cite[(2.40)]{KCDMJPArxiv2021a} (for $\mathfrak{c}_3=0$ and $\mathfrak{c}_1=\mathfrak{c}_2=1/2$), we compare the two first coefficients of higher power of $n$ in both side of \eqref{0.2Dq} to deduce \eqref{power-pi2}. \eqref{power-pi1} is obtained in a similar way and this completes the proof.  
\end{proof}

For the next results of this section, we distinguish two cases according as $\deg\, \pi=1$ or $\deg\, \pi=2$.

\subsection{Case $\deg\, \pi=1$}
In this case, \eqref{0.2Dq} can be rewritten as
\begin{align}
(X-r)\mathcal{D}_q P_n(X)=b_nP_n(X)+c_nP_{n-1}(X),\quad n=0,1,2,\ldots, \label{case-pi-1}
\end{align} 
where $r\in \mathbb{C}$. 
\begin{lemma}\label{uniqueness-case-1}
Let $(P_n)_{n\geq0}$ be a monic OPS satisfying \eqref{case-pi-1}. Then 
\begin{align}
\left( c_2 C_1-q^{-1/2}c_1C_2\right) \left(c_2C_1-q^{1/2}c_1C_2\right)  = 0. \label{uniqueness-cond-case-1}
\end{align}
\end{lemma}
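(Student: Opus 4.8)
\textbf{Proof proposal for Lemma~\ref{uniqueness-case-1}.}

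The plan is to feed the $\deg\pi=1$ structure relation \eqref{case-pi-1} into the general difference equations of Lemma~\ref{system-in-genral-form-} and extract the stated factorization from the low-order instances. First I would record what \eqref{eq1S-eq2S} already gives us: since $a_n\equiv 0$ when $\deg\pi=1$, the sequence $r_n$ of \eqref{eq3S} coincides with $t_n=c_n/C_n$, which satisfies the linear $q$-difference equation $t_{n+2}-2\alpha t_{n+1}+t_n=0$ with $\alpha=(q^{1/2}+q^{-1/2})/2$, hence $t_n=k_1q^{n/2}+k_2q^{-n/2}$ for constants $k_1,k_2$. The claimed identity \eqref{uniqueness-cond-case-1} is, after dividing by $C_1C_2$, precisely $\bigl(t_2-q^{-1/2}t_1\bigr)\bigl(t_2-q^{1/2}t_1\bigr)=0$ (using $c_jC_1/(C_jC_1)=t_j$); equivalently $t_2^2-(q^{1/2}+q^{-1/2})t_1t_2+t_1^2=0$, i.e. $t_2^2-2\alpha t_1t_2+t_1^2=0$. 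So the real content is: show $k_1k_2=0$, equivalently that one of the two exponential modes in $t_n$ is absent.

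Next I would exploit that $b_n=\gamma_n$ is forced by \eqref{power-pi1}, together with $c_n=(b_n-b_{n-1})\sum_{j=0}^{n-1}B_j+\pi(0)b_n$; here $\pi(0)=-r$. The strategy is to combine \eqref{eq3S}, \eqref{eq4S}, \eqref{eq5S} at the smallest admissible values of $n$ (with the conventions $a_0=b_0=c_0=C_0=0$, $\gamma_n=1$ from the excerpt's normalization) to get polynomial relations among $B_0,B_1,B_2,C_1,C_2,k_1,k_2$. Since $b_n\equiv 1$ for $n\ge1$ in the relevant normalization, equations \eqref{seq3S}--\eqref{seq7S} simplify drastically — most of the $a$-terms vanish — and what survives are relations like $c_{n+2}-2\alpha c_{n+1}+c_n=0$-type constraints together with the $B$-recurrences. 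In particular \eqref{eq5S} with $a_n=0$, $b_n=1$ reduces to an equation that expresses $(1-\alpha^2)$ in terms of $C_{n+1},C_n,t_n,t_{n+1}$ and $B_n$; writing this at $n=1$ and using \eqref{eq4S} at $n=1$ to eliminate $B_2$ should, after using $t_n=k_1q^{n/2}+k_2q^{-n/2}$, collapse to a multiple of $k_1k_2$ (or of $t_2^2-2\alpha t_1t_2+t_1^2$, which equals $-(q^{1/2}-q^{-1/2})^2k_1k_2$). Squaring/combining the two lowest equations to clear the $B_j$'s is the mechanical heart of the argument.

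I expect the main obstacle to be bookkeeping rather than conceptual: the system \eqref{eq3S}--\eqref{eq5S} is overdetermined once $a_n=0$ and $b_n$ is pinned down, so the danger is picking a combination that is identically satisfied (degenerate) rather than one that isolates $k_1k_2$. The safe route is to treat $B_0,B_1,C_1,C_2$ as the free unknowns, use \eqref{eq4S} at $n=1$ and \eqref{eq3S} at $n=1$ to solve for $B_2$ and one relation among them, then substitute into \eqref{eq4S} at $n=2$ (or \eqref{eq5S} at $n=1,2$) and verify that the resulting scalar equation factors through $t_2-q^{\pm1/2}t_1$. One should also check that $a_1C_1+c_1\neq0$ from \eqref{def-R1} (equivalently $c_1\neq0$ here, which holds by hypothesis) so that no division is illegitimate. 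Finally, translating $k_1k_2=0$ back via $t_1=k_1q^{1/2}+k_2q^{-1/2}$, $t_2=k_1q+k_2q^{-1}$ gives $c_2C_1-q^{1/2}c_1C_2=(k_1q+k_2q^{-1}-q^{1/2}(k_1q^{1/2}+k_2q^{-1/2}))C_1C_2=(q^{-1}-q^{-1/2})k_2\,C_1C_2$ up to the analogous factor, so the product of the two displayed brackets is a nonzero constant times $k_1k_2C_1^2C_2^2$, and \eqref{uniqueness-cond-case-1} follows.
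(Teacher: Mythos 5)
Your reduction of \eqref{uniqueness-cond-case-1} to the statement $k_1k_2=0$ is correct and matches the paper (this is exactly \eqref{tn-pi-1}), but the heart of your argument --- that suitable combinations of \eqref{eq3S}--\eqref{eq5S} at $n=1,2$, together with the initial data, ``should collapse to a multiple of $k_1k_2$'' --- is precisely the step you have not carried out, and it is the step the paper cannot do either by low-order algebra alone. The difference system \eqref{eq3S}--\eqref{eq5S} at small $n$ keeps introducing new unknowns ($B_2,B_3,C_2,C_3,\dots$) faster than it produces relations, and nothing in those finitely many equations visibly forces one of the two modes $k_1q^{n/2}$, $k_2q^{-n/2}$ to vanish; you yourself flag the danger that the chosen combination is identically satisfied, which is exactly the unresolved issue. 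The paper's proof is instead a global, proof-by-contradiction argument that leans on Lemma \ref{conject-is-dx-classical}: assuming $k_1k_2\neq0$, it (i) uses $n=1,2$ in \eqref{case-pi-1} only to get \eqref{B1-case-1} and hence $B_0\neq 0$; (ii) solves \eqref{eq3S} in closed form (\eqref{explicit-Bn}) and compares the limit of $q^{-n/2}B_n$ as $n\to\infty$ with the explicit expression for $B_n$ of an $x$-classical functional from \cite[(4.4)]{KCDMJPArxiv2021a}, forcing $K_b=0$ and hence \eqref{bn-reduced}; and (iii) evaluates $\sum_{j=0}^{n-1}B_j$ in two ways (from \eqref{bn-reduced} and again from \cite[(4.4)]{KCDMJPArxiv2021a}), producing the two displayed linear equations which, under $k_1k_2\neq0$ and \eqref{valeu-a-pi-1-new}, give $|B_0+2u\phi'(0)|+|B_0-2u\phi'(0)|=0$, contradicting $B_0\neq0$. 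None of these ingredients --- the $x$-classical structure with $\phi,\psi$ from \eqref{value-d-e-a}, the companion paper's closed formulas, or the asymptotic step --- appears in your plan, and without them there is no evidence the truncated system isolates $k_1k_2$.

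There is also a concrete error in your setup: you take ``$\gamma_n=1$'' and hence $b_n\equiv 1$ for $n\geq 1$. In fact \eqref{power-pi1} gives $b_n=\gamma_n$ with $\gamma_n$ the symmetric $q$-integer (the paper's own computation gives $b_1=1$, $b_2=2\alpha$), so $b_n$ grows like $q^{-n/2}$ and the drastic simplifications you expect in \eqref{seq3S}--\eqref{seq7S} do not occur; in particular $b_{n+2}-2\alpha b_{n+1}+b_n=0$ is automatic and carries no information, which is another reason the low-order elimination is unlikely to yield $k_1k_2=0$. To repair the proposal you would essentially have to import the paper's machinery: establish $B_0\neq0$ from \eqref{B1-case-1}, and then use the $x$-classical representation of $B_n$ and of $\sum_{j}B_j$ to contradict $k_1k_2\neq0$.
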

\begin{proof}
Since $(P_n)_{n\geq0}$ satisfies \eqref{case-pi-1}, then $a_n =0$ for each $n=0,1,2,\ldots$ and by \eqref{eq1S-eq2S}  and \eqref{power-pi1}, we have 
\begin{align}
t_n =k_1q^{n/2}+k_2q^{-n/2},~ k_1 = \frac{c_2 C_1-q^{-1/2}c_1C_2}{(q-1)C_1C_2},~ k_2= \frac{c_2C_1-q^{1/2}c_1C_2}{(q^{-1}-1)C_1C_2} .\label{tn-pi-1}
\end{align}
Suppose, contrary to our claim, that \eqref{uniqueness-cond-case-1} does not hold. This means that $k_1k_2\neq 0$. Taking successively $n=1$ and $n=2$ in \eqref{case-pi-1}, and using \eqref{TTRR}, \cite[(2.40)]{KCDMJPArxiv2021a} (for $\mathfrak{c}_3=0$ and $\mathfrak{c}_1=\mathfrak{c}_2=1/2$), we have $b_1=1$, $b_2=2\alpha$, $r=B_0-c_1$, $c_2=(2\alpha -1)(B_1+B_0)-2\alpha r$ and $r(B_1+B_0)=-c_2B_0 +2\alpha (B_0B_1-C_1)$.  
Hence
\begin{align}\label{B1-case-1}
c_1=B_0 -r,\quad B_1=(2\alpha-1)B_0 +2\alpha \frac{C_1}{c_1}.
\end{align}
We claim that 
\begin{align}
B_n = \frac{t_0t_1B_0}{t_nt_{n+1}},\quad n=0,1,2,\dots, \label{bn-reduced}
\end{align}
with  $B_0\neq 0$. Indeed, writing \eqref{eq3S} as $t_{n+3}B_{n+2}-t_{n+1}B_{n+1}=t_{n+2}B_{n+1}-t_{n}B_{n}$ and proceeding in a recurrent way, we have 
\begin{align}\label{explicit-Bn}
B_n =\frac{t_0t_1B_0q^{n/2} +K_b\Big(k_1q^n +(k_2-k_1q^{-1/2})q^{n/2}-k_2q^{-1/2}\Big)}{(k_1q^{n} +k_2)(k_1q^{n+1} +k_2)}q^{(n+1)/2},
\end{align}
where $K_b=(t_2B_1 -t_0B_0)/(1-q^{-1/2})$.
Since $k_2\neq 0$ and $0<q<1$, we obtain $\lim_{n \rightarrow \infty} q^{-n/2}B_n=-K_b/k_2$ and consequently we have $K_b=0$ by applying limit of the same expression using \cite[(4.4)]{KCDMJPArxiv2021a} ( for $\mathfrak{c}_3=0$ and $\mathfrak{c}_1=\mathfrak{c}_2=1/2$). This implies that $B_1=t_0B_0/t_2$. 
If $B_0=0$, then we find $B_1=0$ which is in contradiction with the second equation in \eqref{B1-case-1}. Then \eqref{bn-reduced} is proved. Note that, from \eqref{coef-a-b-conject-classical} and using \eqref{tn-pi-1}, we obtain
\begin{align}\label{valeu-a-pi-1-new}
\mathfrak{a}=\frac{c_2C_1}{c_1C_2}-\alpha =\frac{t_2}{t_1}-\alpha=\frac{k_1q^{1/2}-k_2q^{-1/2}}{2ut_1},~u^{-1}=q^{1/2}-q^{-1/2},
\end{align}
since $a_n=0$. Using \eqref{bn-reduced}, we obtain 
$
S_n =\sum_{j=0} ^{n-1} B_j=t_1B_0 \gamma_n/t_n$ for $n=0,1,2,\ldots$.
Thus using \cite[(4.4)]{KCDMJPArxiv2021a} (for $\mathfrak{c}_3=0$ and $\mathfrak{c}_1=\mathfrak{c}_2=1/2$) to evaluate the same sum, we have 
$t_1 B_0(\mathfrak{a}\gamma_{2n-2}+ \alpha_{2n-2})=-t_n \big(\phi'(0)\gamma_{n-1} -B_0\alpha_{n-1}\big)$. This gives the following equations: 
$$(2\mathfrak{a}ut_1+k_2q^{-1/2})B_0=-2uk_1q^{1/2}\phi'(0)\; \textit{and}\;(2\mathfrak{a}ut_1-k_1q^{1/2})B_0=-2uk_2q^{-1/2}\phi'(0).$$ Taking into account that $k_1k_2\neq 0$ and using \eqref{valeu-a-pi-1-new}, we get $$|B_0+2u\phi'(0)|+|B_0-2u\phi'(0)|=0,$$
which is impossible because $B_0\neq 0$, and the lemma follows.
\end{proof}

\subsection{Case $deg\,\pi=2$}
In this case, we rewrite \eqref{0.2Dq} as
\begin{align}
(X-r)(X-s)\mathcal{D}_qP_n (X)=(a_nX+b_n)P_n (X)+c_nP_{n-1}(X),~ n=0,1,2,\dots,\label{case-pi-2}
\end{align}
where $r,s\in \mathbb{C}$ and $c_n\neq 0$ for $n=1,2,3,\ldots$.
From \eqref{power-pi2}, \eqref{eq1S-eq2S} and \eqref{eq3S}, we obtain
\begin{align}
a_n&=\gamma_n,\quad b_n=\left(\gamma_n -\gamma_{n-1}  \right)\sum_{k=0} ^{n-1}B_k -(r+s)\gamma_n,\label{bn-case-pi-2}\\
t_n&=\frac{c_n}{C_n}=k_1q^{n/2}+k_2q^{-n/2},\quad k_1 = \frac{c_2 C_1-q^{-1/2}c_1C_2}{(q-1)C_1C_2},\quad k_2= \frac{c_2C_1-q^{1/2}c_1C_2}{(q^{-1}-1)C_1C_2},\label{tn-case-pi-2}  \\
r_n&=\widehat{a}q^{n/2}+\widehat{b}q^{-n/2},\quad \widehat{a}=k_1+u(1-q^{-1/2}),\quad \widehat{b}=k_2-u(1-q^{1/2}),\label{rn-case-pi-2}
\end{align}
for $n=0,1,2,\ldots$, $u^{-1}=q^{1/2}-q^{-1/2}$. Recall that $t_0=k_1+k_2$ and so, we also define by compatibility $r_0=\widehat{a}+\widehat{b}.$

\begin{lemma}\label{case-pi-2-unicity}
Let $(P_n)_{n\geq0}$ be a monic OPS satisfying \eqref{case-pi-2}. Then 
\begin{align}
\widehat{a}~\widehat{b} (1-2\mathfrak{a}u)(1+2\mathfrak{a}u)\neq 0,  \label{cond-conject2-final} 
\end{align}
where $\widehat{a}$, $\widehat{b}$ are defined in \eqref{rn-case-pi-2} and $\mathfrak{a}$ is given in \eqref{coef-a-b-conject-classical}. 
\end{lemma}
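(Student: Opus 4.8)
\textbf{Proof strategy for Lemma~\ref{case-pi-2-unicity}.}
The plan is to argue by contradiction, exactly mirroring the pattern used in Lemma~\ref{uniqueness-case-1}: assume that one of the four factors in \eqref{cond-conject2-final} vanishes, and in each case derive enough structural information about the coefficients $B_n$, $C_n$ via the difference equations \eqref{eq3S}--\eqref{eq5S} (together with the explicit forms \eqref{bn-case-pi-2}--\eqref{rn-case-pi-2}) to contradict the regularity of ${\bf u}$, i.e. $C_{n+1}\neq0$, or the $x$-classical normalization encoded in \eqref{value-d-e-a}--\eqref{coef-a-b-conject-classical}. First I would dispose of the case $\widehat a\,\widehat b=0$. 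If, say, $\widehat b=0$, then $r_n=\widehat a\,q^{n/2}$ is a pure geometric sequence, and \eqref{eq3S} collapses to a first-order relation forcing $B_{n+1}$ to be a rational function of $q^{n/2}$ with a single geometric numerator; feeding this into \eqref{eq4S} and comparing growth rates in $q^{n/2}$ (using $0<q<1$ so that the dominant balance as $n\to\infty$ is unambiguous) pins down $C_n$ up to finitely many constants, and then \eqref{eq5S} becomes an over-determined identity in $q^{n/2}$ whose coefficients cannot all vanish — the same "match limits, get $K=0$, then contradiction" mechanism as in \eqref{explicit-Bn} and the lines following it. The symmetric subcase $\widehat a=0$ is handled by the $q\leftrightarrow q^{-1}$ symmetry of the problem.

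For the remaining case $(1-2\mathfrak a u)(1+2\mathfrak a u)=0$, i.e. $2\mathfrak a u=\pm1$, I would use the closed form for $r_n$ in \eqref{rn-case-pi-2} and the relation $\mathfrak a+\alpha = t_2/t_1$-type identity extracted from \eqref{coef-a-b-conject-classical} to recognize that $2\mathfrak a u=\pm1$ forces a resonance between the geometric modes $q^{n/2}$ and $q^{-n/2}$ governing $r_n$ and $a_n=\gamma_n$. Concretely, $a_n=\gamma_n=(q^{n/2}-q^{-n/2})u$-like expressions and $r_n=\widehat a q^{n/2}+\widehat b q^{-n/2}$ will share a root pattern, which makes the coefficient of the top power of $n$ in the recurrence \eqref{eq3S} degenerate; solving \eqref{eq3S} then produces an extra polynomial-in-$n$ factor multiplying $q^{\pm n/2}$ in $B_n$ (a "double root" phenomenon analogous to \eqref{explicit-Bn} but now with $k_1q^n$ replaced by $n\,q^n$). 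Substituting this augmented ansatz for $B_n$ into \eqref{eq4S} and isolating the secular term forces that polynomial factor to be zero, hence $B_n\equiv0$ or a contradiction with \eqref{value-d-e-a}; and if $B_n\equiv 0$ then \eqref{eq5S} together with \eqref{bn-case-pi-2} gives $b_n=-(r+s)\gamma_n$ with incompatible recurrence constraints, again contradicting $c_n\neq0$.

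The main obstacle I anticipate is bookkeeping the exact algebraic relation between $\mathfrak a$, $u$, $\widehat a$, $\widehat b$, $k_1$, $k_2$ and the TTRR data $B_0,B_1,C_1,C_2$: one must translate the hypothesis "$2\mathfrak a u=\pm1$" into a statement purely about the geometric exponents appearing in $r_n$ and $a_n$, and this requires carefully unwinding \eqref{coef-a-b-conject-classical} and the definitions $\widehat a=k_1+u(1-q^{-1/2})$, $\widehat b=k_2-u(1-q^{1/2})$. Once that translation is made, the contradiction is extracted by the now-familiar asymptotic argument ($0<q<1$, take $n\to\infty$, kill the surviving geometric coefficient), so the computational content is routine but the identification of the right "resonance" is delicate. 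I would organize the write-up so that the four factors are treated by two lemmatic sub-arguments (one for $\widehat a\,\widehat b$, one for $(1-2\mathfrak a u)(1+2\mathfrak a u)$), each reducing to a single overdetermined identity in the variable $q^{n/2}$, and each invoking \cite[(4.4)]{KCDMJPArxiv2021a} to evaluate the partial sums $\sum_{j=0}^{n-1}B_j$ just as in the proof of Lemma~\ref{uniqueness-case-1}.
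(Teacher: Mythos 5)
Your overall framework (assume one factor vanishes, extract explicit forms of $B_n$ and $C_n$, and clash them) is the paper's, but in both halves you misidentify the mechanism that actually produces the contradiction, and as written your argument would not close either case. Take $\widehat a=0$: one is indeed forced, as you predict, to $B_n\equiv 0$ and $\mathfrak b=0$, and then comparison of \cite[(4.5)]{KCDMJPArxiv2021a} with the reduced form of \eqref{eq4S} forces $C_1=1/2$, $C_{n+1}=1/4$ for $n\geq 1$ and $\widehat b=1$. But this is exactly the Chebyshev recurrence data, which \emph{does} satisfy the whole system \eqref{eq1S-eq2S}--\eqref{eq5S} (with all $B_n=0$ and $b_n=0$ every term of \eqref{eq5S} vanishes identically), so your claim that \eqref{eq5S} becomes an overdetermined identity whose coefficients cannot all vanish is false, and no growth-rate argument in the difference equations alone can finish this case. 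The paper's contradiction comes from elsewhere: plugging $n=1,2,3$ into \eqref{case-pi-2} (the initial conditions \eqref{b1-pi-2-case-2}--\eqref{c2-b2-pi-2-case-2} and \eqref{take-n=3-case-pi-2}) gives $r=-s$, $c_1=-r^2$, $c_2=\alpha(1-2r^2)$; the identity $\mathfrak a=\alpha$ then forces $r^2=1$, hence $c_1=-1$, $c_2=-\alpha$, so \eqref{tn-case-pi-2} yields $t_n=-2\gamma_n$, contradicting $t_n=-(1-q^{-1/2})\gamma_n$ computed directly from $\widehat a=0$, $\widehat b=1$. This double computation of $t_n$ (equivalently of $k_1,k_2$) from the low-$n$ data is the missing idea in your sketch.

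For $1\pm 2\mathfrak a u=0$ your ``resonance/secular term in \eqref{eq3S}'' picture is also off target. Since $\mathfrak a=(1+r_2)/r_1-\alpha$, the hypothesis $\mathfrak a=-1/(2u)$ merely pins $\widehat a$ to the specific nonzero value $-uq^{-1/2}$; the recurrence \eqref{eq3S} keeps two distinct geometric modes and no double root or polynomial-in-$n$ factor appears there. The degeneracy lives instead in the companion-paper formulas \cite[(4.4)-(4.5)]{KCDMJPArxiv2021a}, whose modes carry the factors $(1\pm 2\mathfrak a u)q^{\pm n}$: with $1+2\mathfrak a u=0$, (4.4) forces $B_n$ to be a combination of $q^n$ and $q^{2n}$, which is compatible with \eqref{eq3S} only if $\mathfrak b=B_0=0$, i.e. $B_n\equiv 0$; then (4.5) gives an explicit $C_{n+1}$ that fails the reduced \eqref{eq4S} precisely because $\widehat a\neq 0$. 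Your fallback of extracting the contradiction from \eqref{eq5S} once $B_n\equiv 0$ is again unreliable (if $r+s=0$ it is vacuous, and otherwise you have not shown the remaining constraint on $C_n$ is inconsistent), so the proposal has genuine gaps in both halves even though its general contradiction-plus-asymptotics template and the use of \cite[(4.4)]{KCDMJPArxiv2021a} for the partial sums are in the right spirit.
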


\begin{proof}
Assume that \eqref{cond-conject2-final} does not hold. 
Suppose, for instance, that $\widehat{a}=0$. Then \eqref{rn-case-pi-2} reduces to $r_n=\widehat{b}q^{-n/2}$ for each $n=0,1,2,\ldots$. Then \eqref{eq3S} becomes
$$q^{-3/2}B_{n+2}-(q^{-1}+q^{-1/2})B_{n+1}+B_n=0,\quad n=0,1,2,\dots,$$
and so we may write
\begin{align}\label{possible-bn-pi-2}
B_n=r_0(1-q^{1/2})q^{n/2}+s_0(1-q)q^n,\quad n=0,1,2,\dots,
\end{align}
for some complex numbers $r_0$ and $s_0$.
From \eqref{coef-a-b-conject-classical}, we also have
\begin{align*}
\mathfrak{a}=\frac{(c_2+2\alpha C_2)C_1}{(c_1+C_1)C_2}-\alpha =\frac{1+r_2}{r_1}-\alpha=-\frac{1}{2u}+\frac{1}{\widehat{b}q^{-1/2}}.
\end{align*}
From \eqref{possible-bn-pi-2}, we get $S_n=\sum_{j=0} ^{n-1}B_j=r_0(1-q^{n/2})+s_0(1-q^n)$ for $n=0,1,2,\ldots$. Now we compute this sum using \cite[(4.4)]{KCDMJPArxiv2021a} (for $\mathfrak{c}_3=0$ and $\mathfrak{c}_1=\mathfrak{c}_2=1/2$) to obtain 
\begin{align}\label{equiv-bn-pi-2}
(r_0q^{n/2}+s_0q^n-r_0-s_0)d_{2n}=\gamma_{n+1}e_n, \quad n=1,2,3,\ldots,
\end{align}
where $2d_{2n}=(1+2\mathfrak{a}u)q^n +(1-2\mathfrak{a}u)q^{-n}$ and $2e_n=(-B_0+2u\phi'(0)  )q^{n/2}-(B_0+2u\phi'(0))q^{-n/2}$ for $n=0,1,2,\ldots$. 
It is easily seen that  \eqref{equiv-bn-pi-2} implies $r_0=0=s_0$ as well as $B_0=0=\phi'(0)$. Hence $B_n=0$ for $n=0,1,2,\ldots.$ In addition, using \eqref{value-d-e-a}, we obtain $\mathfrak{b}=0$. Next we apply \cite[(4.5)]{KCDMJPArxiv2021a} (for $\mathfrak{c}_3=0$ and $\mathfrak{c}_1=\mathfrak{c}_2=1/2$) to obtain
\begin{align}\label{cn-a-bar-0-pi-2}
C_{n+1}= \frac{(1-q^{n+1})(B-q^n)\Big(q^{2n+1}+( 4(q+\widehat{b})C_1-q-B) q^n +B\Big)}{4(B-q^{2n})(B-q^{2n+2})},
\end{align}
for $n=0,1,2,\ldots$, with $B=q+\widehat{b}(1-q)$, while \eqref{eq4S} reduces to
$$(q^{-1/2}+q^{-1})(C_{n+1}-1/4)-2(1+\alpha)(C_n-1/4)+(q^{1/2}+q)(C_{n-1}-1/4)=0.$$
Therefore, we may write
$C_{n+1}=\overline{r}_0q^{n/2}+\overline{s}_0q^n +1/4$ for $n=2,3,\ldots$, with $\overline{r}_0,\overline{s}_0 \in \mathbb{C}$. This is compatible with \eqref{cn-a-bar-0-pi-2} 
if and only if $C_1=1/2$, $\overline{r}_0=\overline{s}_0=0$ and $\widehat{b}=1$. This implies that 
$$
B_{n-1}=0,\quad C_1=1/2,\quad C_{n+1}=1/4,\quad n=1,2,\dots.
$$
From \eqref{b1-pi-2-case-2}-\eqref{c2-b2-pi-2-case-2} below we find $r=-s$ and so 
\begin{align}\label{c1-c2-a=0}
c_1=-r^2,\quad c_2=\alpha (1-2r^2) .
\end{align}
Since $\widehat{a}=0$ and $\widehat{b}=1$, from \eqref{rn-case-pi-2} we find $k_1=-k_2=-(1+q^{1/2})^{-1}$ and so $t_n=-(1-q^{-1/2})\gamma_n$. On the other hand we have
$$\mathfrak{a}=\frac{(c_2+2\alpha C_2)C_1}{(c_1+C_1)C_2}-\alpha =\frac{2\alpha +t_2}{1+t_1}-\alpha =\alpha .$$ 
Using \eqref{c1-c2-a=0}, we also write $\alpha =\mathfrak{a}=\alpha(5 -6r^2)/(1-2r^2)$,
hence $r^2=1$ so that $c_1=-1$ and $c_2=-\alpha$. But using what is preceding, from \eqref{tn-case-pi-2} we have $k_1=-k_2=-2u$ and therefore $t_n=-2\gamma_n$, which is in contradiction with the previous expression of $t_n$, which gives $\widehat{a}\neq 0$. The case $\widehat{b}=0$ can be treated similarly.

Assume now that $1+2\mathfrak{a}u=0$. Since $\mathfrak{a}=-\alpha+ (1+r_2)/r_1$, we obtain $\widehat{a}=-uq^{-1/2}\neq 0$.  
On the other hand, we use \cite[(4.4)]{KCDMJPArxiv2021a}  (for $\mathfrak{c}_3=0$ and $\mathfrak{c}_1=\mathfrak{c}_2=1/2$) to obtain
\begin{align*}
B_n=q^n (1+q^{-1})\Big(\mathfrak{b}u(q^n-1) +\frac{B_0}{1+q^{-1}}  \Big),
\end{align*}
for $n=0,1,2,\ldots$. This satisfies \eqref{eq3S} if and only if $\mathfrak{b}=0$ and $B_0=0$, and so $B_n=0$. Taking this into account, \cite[(4.5)]{KCDMJPArxiv2021a} (for $\mathfrak{c}_3=0$ and $\mathfrak{c}_1=\mathfrak{c}_2=1/2$) gives $$C_{n+1}=\frac{1}{4}(1-q^{n+1})\Big(1-q^n +\frac{4C_1}{1-q}q^n  \Big),\quad n=0,1,2,\dots.$$ 
This does not satisfy \eqref{eq4S} because $\widehat{a}\neq 0$ and $B_n=0$.
Hence $1+2\mathfrak{a}u\neq 0$. The case $1-2\mathfrak{a}u=0$ can be treated similarly, which proves the lemma. 
\end{proof}

\section{Proof of theorems \ref{T1} and \ref{T2}}

\begin{proof}[Proof of Theorem \ref{T1}]
Note that \eqref{uniqueness-cond-case-1} is equivalent to $k_1k_2= 0$. Suppose that $k_1=0$. By \eqref{tn-pi-1}, we have $t_n=k_2q^{-n/2}$, where $k_2=q^{1/2}c_1/C_1$. We claim that
\begin{align}\label{Bn-final-case-1}
B_n=B_0q^n =(r+c_1)q^n,\quad n=0,1,2,\ldots.
\end{align}
Indeed, \eqref{eq3S} reduces to $q^{-1/2}B_{n+2}+(1+q^{1/2})B_{n+1}+qB_n=0,~~n=0,1,2,\ldots$ and so we find $B_n =vq^n+sq^{n/2}$ for some $v,s\in \mathbb{C}$.
Moreover, since $k_1=0$, from \eqref{valeu-a-pi-1-new} we get $ \mathfrak{a}=-1/(2u)$. Hence, by \eqref{value-d-e-a}, $\phi(X)=-(\left(X+2\mathfrak{b}u \right)\left(X-B_0 \right)+2uq^{-1/2}C_1)/(2u)$ and $\psi(X)=X-B_0$. 
Therefore, using \cite[(4.4)]{KCDMJPArxiv2021a} (for $\mathfrak{c}_3=0$ and $\mathfrak{c}_1=\mathfrak{c}_2=1/2$), we obtain $B_n= q^{(2n-1)/2}(2\alpha u\mathfrak{b}(q^n -1) + q^{1/2}B_0)$. Comparing the two previous expressions for $B_n$, we find $s=0=\mathfrak{b}$ and $v=B_0$. Hence using the first equation in \eqref{B1-case-1}, \eqref{Bn-final-case-1} follows. As consequence, taking $n=1$ in \eqref{Bn-final-case-1} and comparing the result with the expression for $B_1$ given by \eqref{B1-case-1}, we obtain 
\begin{align}\label{c1-case-1}
C_1=(q^{1/2}-1)(r+c_1)c_1.
\end{align}
Since $C_n =c_n/t_n$, from \eqref{power-pi1} and \eqref{Bn-final-case-1}, we find 
\begin{align}\label{cn-first-case-1}
C_{n+1}=\frac{C_1}{(q-1)c_1}\left(1-q^{n+1} \right)\left(r-\frac{r+c_1}{1+q^{1/2}}(1+q^{(2n+1)/2)}  \right).
\end{align}  
Taking into account that $\mathfrak{a}=-1/(2u)$ and $\mathfrak{b}=0$, using \cite[(4.5)]{KCDMJPArxiv2021a} (for $\mathfrak{c}_3=0$ and $\mathfrak{c}_1=\mathfrak{c}_2=1/2$), we  also have
\begin{align}\label{cn-second-case-pi-1}
C_{n+1}=\left( 1-q^{n+1}\right)\left(\frac{1}{4}(1-q^n) +\frac{C_1}{1-q} q^n \right),\quad n=0,1,2,\ldots.
\end{align}
If $c_1 =rq^{1/2}$ then \eqref{cn-first-case-1} becomes $C_{n+1}=C_1(1-q^{n+1})q^n/(1-q)$ which is incompatible with \eqref{cn-second-case-pi-1}. Thus $c_1 \neq rq^{1/2}$. Comparing the expressions for $C_{n+1}$ given in \eqref{cn-first-case-1} and \eqref{cn-second-case-pi-1} yields
\begin{align}\label{c11-case-1}
C_1= (1-q)\frac{(1+q^{1/2})c_1}{4(c_1-q^{1/2}r)}.
\end{align}
Therefore, combining \eqref{c11-case-1} with \eqref{c1-case-1}, we see that $r+c_1$ is a solution of the following quadratic equation
\begin{align}
2Y^2 -2(1+q^{-1/2})c_1Y -1-\alpha =0.\label{quadratic-eq-case-1}
\end{align}
Let $c$ and $d$ be two complex numbers defined by
\begin{align*}
(c,d)~ \textit{or}~(d,c) \in \left\lbrace \left(c_1-\sqrt{\Delta} ,q^{-1/2}( c_1-\sqrt{\Delta})\right),~\left(c_1+\sqrt{\Delta}, q^{-1/2}(c_1+\sqrt{\Delta})\right)  \right\rbrace,
\end{align*}
where 
$\Delta=c_1 ^2 +q^{1/2}$. Note that $cd\neq 0$. Set $Y_1=(c+d)/2$  and $Y_2=-(c^{-1}+d^{-1})/2$. Hence $Y_1$ and $Y_2$ are solutions of \eqref{quadratic-eq-case-1}. Without loss of generality we may set $r+c_1=Y_1$ and so $Y_1+Y_2=(1+q^{-1/2})c_1$, which yields 
$$r=(c+d)\frac{1+cdq^{-1/2}}{2cd(1+q^{-1/2})},\quad c_1= (c+d)\frac{cd-1}{2cd(1+q^{-1/2})}.$$
Hence \eqref{c11-case-1} (or \eqref{c1-case-1}) becomes $C_1=(1-q)(1-cd)/4$. Consequently, from \eqref{cn-first-case-1} (or \eqref{cn-second-case-pi-1}) and \eqref{Bn-final-case-1}, we obtain
\begin{align}\label{bn-cn-final-case-1}
B_n =(c+d)q^n/2,\quad C_{n+1}=(1-q^{n+1})(1-cdq^n)/4,
\end{align}
together with $k_2=q^{1/2}c_1/C_1=2u(c+d)/(cd(1+q^{-1/2})).$ Using \eqref{bn-cn-final-case-1}, equation \eqref{eq4S} now reads as
\begin{align}\label{Cn-case-1}
(q^{-1}+q^{-1/2})(C_{n+1}-1/4) -2(1+\alpha)(C_n -1/4)& +(q+q^{1/2})(C_{n-1} -1/4)\nonumber\\
& =(\alpha-1)(\alpha+1/2)(c+d)^2 q^{2n-1}.
\end{align}
From $c^2+d^2=2\alpha cd$, it is easy to see that $B_n$ and $C_{n+1}$, in \eqref{bn-cn-final-case-1}, satisfy \eqref{Cn-case-1}. \eqref{eq5S} in this case ($a_n=0$ for $n=0,1,2,\ldots$) reads as
\begin{align}\label{Case-pi-1-Last-eq}
2(1-\alpha)b_nB_n ^2 +(1-2\alpha)(c_n +c_{n+1})B_n  +c_{n+1}B_{n+1}+c_nB_{n-1}\nonumber \\
+(b_n-b_{n+2})(C_{n+1}-1/4)+(b_n-b_{n-2})(C_{n}-1/4) =0,
\end{align}
where $c_n=t_nC_n=k_2q^{-n/2}C_n$ for $n=1,2,\ldots$. Similarly, one may check that \eqref{Case-pi-1-Last-eq} is also satisfied and, therefore, the system of equations \eqref{eq1S-eq2S}-\eqref{eq5S} is fulfilled.  
By a similar argument, if $k_2=0$, we obtain \eqref{bn-cn-final-case-1} with $q$ replaced by $1/q$ and $c^2+d^2-2\alpha cd=0$ as solution of the system of difference equations \eqref{eq1S-eq2S}-\eqref{eq5S}. Thus
\begin{align}\label{final-answer-pi-1}
P_n (X)=Q_n \left(X;c,d|q\right)\quad \text{or}\quad P_n(X)= Q_n\left( X;c,d|q^{-1}\right),\quad n=0,1,2,\dots,
\end{align}
with $c^2+d^2-2\alpha cd=0$, i.e. $c/d=q^{\pm 1/2}$, and this is precisely the assertion of Theorem \ref{T1}.
\end{proof}

\begin{proof}[Proof of Theorem \ref{T2}]
Taking successively $n=1$ and $n=2$ in \eqref{case-pi-2} using \eqref{TTRR} and \cite[(2.40)]{KCDMJPArxiv2021a}  (for $\mathfrak{c}_3=0$ and $\mathfrak{c}_1=\mathfrak{c}_2=1/2$) we obtain the following:
\begin{align}
&B_0=b_1+r+s,\quad c_1=(B_0-r)(B_0-s), \label{b1-pi-2-case-2}\\
&b_2=(2\alpha -1)(B_0+B_1)-2\alpha (r+s),\label{b2-pi-2-case-2}\\
&rs(B_0+B_1 )=c_2B_0 -b_2(B_0B_1-C_1),\label{c2-pi-2-case-2}\\
&c_2=b_2(B_0+B_1)-2\alpha (B_0B_1-C_1)+(r+s)(B_0+B_1)+2\alpha rs. \label{c2-b2-pi-2-case-2}
\end{align}
Solving \eqref{eq3S} we find
\begin{align}\label{first-Bn-pi-2-a}
B_n =\frac{r_0r_1B_0q^{n/2} +\widehat{K}_b\Big(\widehat{a} q^n +(\widehat{b}-\widehat{a} q^{-1/2})q^{n/2}-\widehat{b} q^{-1/2}\Big)}{(\widehat{a} q^{n} +\widehat{b})(\widehat{a} q^{n+1} +\widehat{b} )}q^{(n+1)/2}\; ,
\end{align}
for $n=0,1,2,\ldots$, where $\widehat{K}_b=(r_2B_1 -r_0B_0)/(1-q^{-1/2})$. Since $\widehat{a}~\widehat{b}\neq 0$ and $0<q<1$, then $\lim_{n \rightarrow \infty }q^{-n/2}B_n=-\widehat{K}_b/\widehat{b}$. Evaluating this limit using \cite[(4.4)]{KCDMJPArxiv2021a} (for $\mathfrak{c}_3=0$ and $\mathfrak{c}_1=\mathfrak{c}_2=1/2$), we find $\widehat{K}_b=0$, because $1-2\mathfrak{a}u\neq 0$. Hence \eqref{first-Bn-pi-2-a} reduces to 
\begin{align}\label{expicit-Bn-pi-2}
B_n=\frac{r_0r_1B_0}{r_nr_{n+1}}, \quad n=0,1,2,\dots.
\end{align}
It is immediate that 
$S_n =\sum_{j=0} ^{n-1}B_j=r_1B_0a_n/r_n$ for $n=0,1,2,\ldots$. Comparing this with the result obtained using \cite[(4.4)]{KCDMJPArxiv2021a} (for $\mathfrak{c}_3=0$ and $\mathfrak{c}_1=\mathfrak{c}_2=1/2$) we have 
\begin{align}
(2\mathfrak{a}ur_1q^{-1/2}+\widehat{b}q^{-1})B_0=-2\widehat{a} u\phi'(0),\quad (2\mathfrak{a}ur_1q^{1/2}-\widehat{a}q)B_0=-2 \widehat{b}u\phi'(0).\label{phi-psi-r1-1a-1b}
\end{align}
{\sc Case 1. } Suppose $\phi'(0)=0$. From \eqref{phi-psi-r1-1a-1b} we obtain $r_1B_0=0$. But from \eqref{def-R1} we obtain $0\neq c_1+a_1C_1=r_1C_1$ , i.e. $r_1\neq 0$, and so $B_0=0$. (Conversely, if we assume $B_0=0$, by  (\ref{cond-conject2-final}), we obtain $\phi'(0)=0$.) Hence $B_0=0$ and $\mathfrak{b}=0$. From this, we use \cite[(4.4)-(4.5)]{KCDMJPArxiv2021a} (for $\mathfrak{c}_3=0$ and $\mathfrak{c}_1=\mathfrak{c}_2=1/2$) to obtain $B_{n}=0$ and
\begin{align}
C_{n+1}=\frac{(1-q^{n+1})(1-h q^{n-1})(1+wq^n+hq^{2n})}{4(1-hq^{2n-1}) (1-hq^{2n+1})},\label{cn+1-false-pi-2}
\end{align}
with $h=-(1+2\mathfrak{a}u)/(1-2\mathfrak{a}u)$ and $w=4u(2(\mathfrak{a}+\alpha)C_1-\mathfrak{a})/(2\mathfrak{a}u-1)$. Now defining $\widehat{C}_n=C_n -1/4$, \eqref{eq4S} reads as 
\begin{align}\label{Cn-pi-2-false-1}
(r_{n+1}+r_{n+2})\widehat{C}_{n+1} -2(1+\alpha)r_n\widehat{C}_{n} +(r_{n-1}+r_{n-2})\widehat{C}_{n-1} =0.
\end{align} 
Therefore, we may write
\begin{align}\label{Cn1bb-pi-2}
C_{n+1}= \frac{1}{4} +  \frac{\widehat{\theta}_0\widehat{\theta}_1\widehat{C}_1 q^{n/2} +\widehat{K}_c\Big(\widehat{r}_0 q^n +(\widehat{r_1}-\widehat{r}_0q^{-1/2})q^{n/2}-\widehat{r}_1q^{-1/2}\Big)}{(\widehat{r}_0q^{n} +\widehat{r}_1)(\widehat{r}_0q^{n+1} +\widehat{r}_1)q^{-(n+1)/2}} ,
\end{align}
for $n=2,3,\ldots$, for some complex numbers $\widehat{\theta}_0$, $\widehat{\theta}_1$ and $\widehat{K}_c$, where $\widehat{\theta}_n=r_n+r_{n+1}=\widehat{r}_0q^{n/2}+\widehat{r}_1q^{-n/2}$.
Taking into account \eqref{cond-conject2-final} one may see that \eqref{cn+1-false-pi-2} and \eqref{Cn1bb-pi-2} are compatible if and only if either
\begin{align}\label{data-for-Chebyschev}
C_1=1/4,\quad \widehat{K}_c=0,\quad \mathfrak{a}=\alpha(4\alpha^2-3)/(4\alpha^2-1),
\end{align} 
or
\begin{align}\label{data-for-Chebyschev-b}
C_1=1/2,\quad \mathfrak{a}=\alpha,\quad \widehat{K}_c=0,\quad  \widehat{\theta}_0\widehat{\theta}_1=0.
\end{align}
In case of \eqref{data-for-Chebyschev} holds, we get 
$$
B_n=0,\quad C_{n+1}=1/4,\quad n=0,1,\dots.
$$
This satisfies \eqref{eq1S-eq2S}-\eqref{eq5S}, taking into account that from \eqref{power-pi2} we have $a_n=\gamma_n$ and $b_n=-(r+s)\gamma_n$ for $n=0,1,\ldots$. From \eqref{b1-pi-2-case-2}-\eqref{c2-b2-pi-2-case-2}, we obtain $r+s=0$, $c_1=-r^2$ and $c_2=-\alpha(2r^2-1/2)$. In addition, $r^2=\alpha^2$. In fact, this follows by comparing the expression of $\mathfrak{a}$ given in \eqref{data-for-Chebyschev} and the one obtained from \eqref{coef-a-b-conject-classical}. So we now have $c_1=-\alpha^2$ and $c_2=-\alpha(2\alpha^2-1/2)$. Next, with these expressions, $k_1$ and $k_2$ given in \eqref{tn-case-pi-2} become $k_1=-u(1+q)$ and $k_2=u(1+q^{-1})$ and consequently $c_n=-\alpha \gamma_{n+1}/2$ for $n=0,1,\ldots$.
However taking $n=3$ in \eqref{case-pi-2} and using the fact that $B_n=0$ and $r=-s$, we obtain
\begin{align}
2(2\alpha^2 -1)(C_1+C_2)=\alpha^2-1 +c_3+(4\alpha^2-1)r^2,~c_3C_1=(1-\alpha^2 -C_1-C_2)r^2 .\label{take-n=3-case-pi-2}
\end{align}
It is then clear that the obtained values of $C_1,\,C_2,\,c_3$ and $r^2$ do not satisfy \eqref{take-n=3-case-pi-2}.

For the case where conditions \eqref{data-for-Chebyschev-b} meet, we obtain
$$B_{n-1}=0,~~C_1=1/2,~~C_{n+1}=1/4,\quad\quad~~~n=1,2,\ldots\;.$$
Again this satisfies \eqref{eq1S-eq2S}-\eqref{eq5S} taking into account that from \eqref{power-pi2} we have $a_n=\gamma_n$ and $b_n=-(r+s)\gamma_n$ for $n=0,1,\ldots$. From \eqref{b1-pi-2-case-2}-\eqref{c2-b2-pi-2-case-2}, we obtain $r+s=0$,  $c_1=-r^2$ and $c_2=\alpha(1-2r^2)$. In addition, from the expression of $\mathfrak{a}$ giving in \eqref{data-for-Chebyschev-b}, we write
$$
\alpha=\mathfrak{a}=\frac{(c_2+2\alpha C_2)C_1}{(c_1+C_1)C_2}-\alpha =\alpha\frac{5-6r^2}{1-2r^2}.
$$
Therefore $r^2=1$, and we have $c_1=-1$ and $c_2=-\alpha$. Hence $k_1$ and $k_2$ given in \eqref{tn-case-pi-2} become $k_1=-k_2=-2u$ and, consequently, $t_n=-2\gamma_n$ so that $c_1=-1$ and $c_n=-\gamma_{n}/2$ for $n=1,2,\ldots$.
We check at once that $C_1,~C_2,~c_3$ and $r^2$ satisfy \eqref{take-n=3-case-pi-2}. Hence 
$P_n (X)=\widehat{ T}_n \left(X\right)$ for $n=0,1,2,\ldots$, where $(\widehat{T}_n)_{n \geq 0}$ is the monic Chebyschev polynomials of the first kind, and this is precisely the first assertion of Theorem \ref{T2}.

{\sc Case 2. } Suppose $\phi'(0)\neq 0$. \eqref{phi-psi-r1-1a-1b}, taking into account \eqref{cond-conject2-final}, implies $B_0 \neq 0$. The converse is also true. Hence
$$r_1B_0 (\widehat{a}q^{1/2}-\widehat{b}q^{-1/2} )\neq 0.$$
 Solving \eqref{phi-psi-r1-1a-1b}, we get
\begin{align}\label{a-B_0-from-phi-pi-2}
\mathfrak{a}=-\frac{1+(q\widehat{a}/\widehat{b})^2}{2u\Big(1-(q\widehat{a}/\widehat{b})^2 \Big)},\quad B_0=2u\phi'(0)\frac{1-(q\widehat{a}/\widehat{b})}{1+(q\widehat{a}/\widehat{b})}.
\end{align}
Considering $\widehat{a}$, $\widehat{b}$, $B_0$ and $C_1$ as free parameters, let us define, without loss of generality, two complex numbers $a$ and $b$  such that $-q^{a/2}$ and $q^{b/2}$ are solutions of the following quadratic equation
\begin{align}
Y^2 +\frac{2r_1B_0q^{1/4}}{\widehat{b}(1+q^{1/2})}~Y +~\frac{\widehat{a}}{\widehat{b}}~=0.\label{Equation_for_def_a_b}
\end{align}
Thus
\begin{align}
q^{a/2}-q^{b/2}&=\frac{2r_1B_0q^{1/4}}{\widehat{b}(1+q^{1/2})},\quad\quad~q^{(a+b)/2}=-\widehat{a}/\widehat{b}.\label{def_qab}
\end{align}
On the other hand, we have
\begin{align}\label{defi-ab-pi-2}
(q^{a/2},~q^{b/2}) \in \left\lbrace \left( \frac{\widehat{a}(1+q^{1/2})}{r_1B_0q^{1/4}+\sqrt{\Delta}} ,
~-\frac{r_1B_0q^{1/4}  +\sqrt{\Delta}}{\widehat{b}(1+q^{1/2})}\right),\right. \\
 \left. \left( \frac{\widehat{a}(1+q^{1/2})}{r_1B_0q^{1/4}-\sqrt{\Delta}} ,~ -\frac{r_1B_0q^{1/4}-\sqrt{\Delta}}{\widehat{b}(1+q^{1/2})}\right)     \right\rbrace ,\nonumber
\end{align}
where $\Delta =q^{1/2}B_0^2r_1 ^2 -\widehat{a}\widehat{b}(1+q^{1/2})^2 $. From \eqref{def_qab}, \eqref{a-B_0-from-phi-pi-2} and \eqref{value-d-e-a}, we obtain  
\begin{align*}
\mathfrak{a}&=-\frac{1+q^{a+b+2}}{2u(1-q^{a+b+2})},~B_0 =\frac{(1+q^{1/2})q^{1/4}(q^{a/2}-q^{b/2})}{2(1-q^{(a+b+2)/2})},\\
\mathfrak{b}&=\frac{q^{3/4}(q^{a/2}-q^{b/2})q^{(a+b+2)/2}}{2u^2(q^{1/2}-1)(1-q^{a+b+2})(1-q^{(a+b+2)/2})}.
\end{align*}
 Note that \eqref{rn-case-pi-2} can be written using \eqref{def_qab} as
\begin{align}\label{rn-b-pi-2}
r_n=\widehat{b}(1-q^{n+(a+b)/2})q^{-n/2}, \quad n=0,1,2,\dots.
\end{align}
Therefore \eqref{expicit-Bn-pi-2} becomes
\begin{align}\label{final-and-exact-Bn-pi-2}
B_n&=q^{1/4}(1+q^{1/2})\frac{(1-q^{(a+b)/2})(q^{a/2}-q^{b/2})q^n}{2(1-q^{(2n+a+b)/2})(1-q^{(2n+a+b+2)/2})}\\
&= \Big(q^{(2a+1)/4}+q^{-(2a+1)/4} -y_n-z_n \Big)/2.\nonumber 
\end{align}
Taking into account the above, \eqref{value-d-e-a} becomes 
\begin{align*}
\phi(X)&= -\frac{1+q^{a+b+2}}{2u(1-q^{a+b+2})}X^2 +q^{1/4}(1+q^{1/2})\frac{(q^{a/2}-q^{b/2})}{4u(1+q^{(a+b+2)/2})} X \\
&\quad+ \frac{(1+\alpha)q^{(a+b+4)/2}(q^{a/2}-q^{b/2})^2}{2u(1-q^{a+b+2})(1-q^{(a+b+2)/2})^2} -q^{-1/2}\frac{1-q^{a+b+3}}{1-q^{a+b+2}}C_1  ,\\
\psi(X)&=X-q^{1/4}(1+q^{1/2})\frac{(q^{a/2}-q^{b/2})}{2(1-q^{(a+b+2)/2})}.
\end{align*}
Let ${\bf u}$ be the regular linear functional with respect to the monic OPS $(P_n)_{n\geq 0}$. By \cite[(4.2)]{KCDMJPArxiv2021a} (for $\mathfrak{c}_3=0$ and $\mathfrak{c}_1=\mathfrak{c}_2=1/2$), the regularity conditions for ${\bf u}$ are
$$
(1-q^{n+a+1})(1-q^{n+b+1})(1-q^{n+a+b+1})C_1 \neq 0,  \quad n=0,1,2,\dots.
$$  
Moreover, by \cite[(4.4)-(4.5)]{KCDMJPArxiv2021a} (for $\mathfrak{c}_3=0$ and $\mathfrak{c}_1=\mathfrak{c}_2=1/2$), we obtain the same expression for $B_n$ and
\begin{align*}
C_{n+1}&= \frac{K(1-q^{n+1})(1-q^{n+a+b+1})(1-q^{n+a+1})(1-q^{n+b+1})}{(1-q^{(2n+a+b+1)/2})(1-q^{(2n+a+b+2)/2})^2(1-q^{(2n+a+b+3)/2})},
\end{align*}
where $$K=-\frac{q^{-1/2}uC_1(1-q^{(a+b+3)/2})(1-q^{(a+b+2)/2})^2}{(1-q^{a+1})(1-q^{b+1})(1+q^{(a+b+1)/2})}.$$ 
 After tedious calculations, we can see that this expression for $C_{n+1}$ and \eqref{final-and-exact-Bn-pi-2} satisfy \eqref{eq4S} if and only if 
\begin{align*}
C_1=\frac{(1-q)(1-q^{a+1})(1-q^{b+1})(1+q^{(a+b+1)/2})}{4(1-q^{(a+b+3)/2})(1-q^{(a+b+2)/2})^2}.
\end{align*} 
Alternatively,  $C_1$ follows by taking $n=2$ in \eqref{eq4S}, using $C_2$ and $C_3$. Note that \eqref{eq4S} holds for each $n=2,3,\ldots$. Consequently, we obtain
\begin{align}\label{final-exact-Cn-pi-2}
C_{n+1}&= \frac{(1-q^{n+1})(1-q^{n+a+b+1})(1-q^{n+a+1})(1-q^{n+b+1})}{4(1-q^{(2n+a+b+1)/2})(1-q^{(2n+a+b+2)/2})^2(1-q^{(2n+a+b+3)/2})}=y_nz_{n+1}/4 \; .
\end{align}
Taking into account that $c_n=t_nC_n$, \eqref{b1-pi-2-case-2}-\eqref{c2-b2-pi-2-case-2} yields 
\begin{align*}
&\widehat{b}q^{-1/2}(1-q^{(a+b+2)/2})C_1 +(r+s)B_0 -rs=B_0 ^2 +C_1,\\
&\widehat{b}q^{-1}(1-q^{(a+b+4)/2})B_0C_2 -rs(B_0+B_1)+2\alpha(B_0B_1-C_1)(r+s)\\ 
&\quad \quad\quad\quad\quad\quad\quad\quad\quad\quad\quad\quad\quad\quad\quad= (2\alpha-1)\Big((B_0B_1-C_1)(B_0+B_1) +B_0C_2\Big),\\
&\widehat{b}q^{-1}(1-q^{(a+b+4)/2})C_2 -2\alpha rs +(2\alpha-1)(r+s-B_0-B_1)(B_0+B_1) \\
&\quad\quad\quad\quad\quad \quad\quad\quad\quad\quad\quad\quad\quad\quad\quad=2\alpha (C_1-B_0B_1)+(2\alpha-1)C_2.
\end{align*}
Solving firstly the above system for $\widehat{b}$, $r+s$ and $rs$, and secondly the obtained result for $r$ and $s$, we get $ \widehat{b}=uq^{1/2}(1+q^{-(a+b+2)/2})$ and
\begin{align*}
(r,s)\quad \text{or}\quad (s,r) \in \left\lbrace \left((q^{(2a+1)/4}+q^{-(2a+1)/4})/2,\,-(q^{(2b+1)/4}+q^{-(2b+1)/4}) \right)/2    \right\rbrace\; .
\end{align*}
So \eqref{rn-b-pi-2} becomes $r_n =u(1+q^{-(a+b+2)/2})(1-q^{n+(a+b)/2})q^{(1-n)/2}$ and $t_n =u(1+q^{-(a+b+1)/2})(1-q^{n+(a+b+1)/2})q^{-n/2}$. Using \eqref{final-and-exact-Bn-pi-2}, \eqref{bn-case-pi-2} becomes 
\begin{align}
b_n=(q^{a/2}-q^{b/2})\gamma_n \frac{(1+q^{n+a+b+1/2})}{2(1-q^{n+(a+b)/2})}q^{-(2a+2b+1)/4} ,\label{final-exact-bn-pi-2}
\end{align}
for each $n=0,1,2,\ldots$. Also, since $c_n=t_nC_n$, we obtain
\begin{align}
c_n=-\gamma_n \frac{(1-q^{n+a})(1-q^{n+b})(1-q^{n+a+b})(1+q^{-(a+b+1)/2})}{4(1-q^{n+(a+b-1)/2})(1-q^{n+(a+b)/2})^2},\label{final-exact-cn-pi-2}
\end{align}
for $n=0,1,2,\ldots$.
Finally, again after tedious computations, from \eqref{final-and-exact-Bn-pi-2}, \eqref{final-exact-Cn-pi-2}, \eqref{final-exact-bn-pi-2} and \eqref{final-exact-cn-pi-2}, we see that \eqref{eq5S} holds. Hence \eqref{eq1S-eq2S}-\eqref{eq5S} also hold. Note that \eqref{a-B_0-from-phi-pi-2} may also write as
\begin{align*}
\mathfrak{a}=\frac{1+(q^{-1}\widehat{b}/\widehat{a})^2}{2u\Big(1-(q^{-1}\widehat{b}/\widehat{a})^2 \Big)},\quad B_0= -2u\phi'(0)\frac{1-(q^{-1}\widehat{b}/\widehat{a})}{1+(q^{-1}\widehat{b}/\widehat{a})}.
\end{align*}
Proceeding similarly with the same parameters $a$ and $b$ as defined in \eqref{defi-ab-pi-2}, we obtain the same results with $q$ replaced by $q^{-1}$. Thus
$$P_n(X)=\widehat{P}_n ^{(a,b)}(X|q)\quad \text{or}\quad P_n(X)=\widehat{P}_n ^{(a,b)}(X|q^{-1}),$$
where $\widehat{P}_n ^{(a,b)}(\cdot|q)$ is the monic continuous $q$-Jacobi polynomial, and this is precisely the second assertion of Theorem \ref{T2}.
\end{proof}

\section*{Acknowledgments}
The authors thank to Professor T. H. Koornwinder for helpful discussions and comments. This work is partially supported by the Centre for Mathematics of the University of Coimbra - UIDB/00324/2020, funded by the Portuguese Government through FCT/MCTES.

\end{document}